\documentclass[12pt]{amsart}
\usepackage[margin=0.9in]{geometry}
\usepackage[utf8]{inputenc}
\usepackage[T1]{fontenc}
\usepackage{lmodern, appendix, comment}
\usepackage{amsmath,amssymb,amsfonts,mathtools}
\usepackage{amsthm}
\usepackage{enumitem}
\usepackage{microtype}
\usepackage{xcolor}
\usepackage[colorlinks=true,linkcolor=blue!50!black,citecolor=blue!50!black,urlcolor=blue!50!black]{hyperref}

\DeclareUnicodeCharacter{2013}{--}
\DeclareUnicodeCharacter{2014}{---}
\DeclareUnicodeCharacter{2019}{'}
\DeclareUnicodeCharacter{00E4}{\"a}
\DeclareUnicodeCharacter{00E9}{\'e}

\numberwithin{equation}{section}
\newtheorem{theorem}{Theorem}[section]
\newtheorem{proposition}[theorem]{Proposition}
\newtheorem{lemma}[theorem]{Lemma}
\newtheorem{corollary}[theorem]{Corollary}

\theoremstyle{definition}

\theoremstyle{remark}

\newtheorem{remarks}[theorem]{Remarks}
\newtheorem{conjecture}[theorem]{Conjecture}

\newcommand{\C}{\mathbb C}

\newcommand{\Bball}{\mathbb B}

\DeclareMathOperator{\Aut}{Aut}

\DeclareMathOperator{\U}{U}
\DeclareMathOperator{\tr}{tr}
\DeclareMathOperator{\Spec}{Spec}
\DeclareMathOperator{\Reg}{Reg}

\newcommand{\trans}{\mathsf T}

\title[On the Classification of Stein spaces with Bergman--Einstein metrics]{On the Classification of Stein Spaces with Bergman--Einstein Metrics}

\author{Soumya Ganguly}
\address{Department of Mathematics, Rutgers University--New Brunswick, 110 Frelinghuysen Rd, Piscataway, NJ 08854, USA}
\email{soumya.ganguly@rutgers.edu}

\author{Siddhartha Sahi}
\address{Department of Mathematics, Rutgers University--New Brunswick, 110 Frelinghuysen Rd, Piscataway, NJ 08854, USA}
\email{sahi@math.rutgers.edu}

\subjclass[2020]{Primary 32Q20; Secondary 32V20, 32C15, 32M18, 20C15}
\keywords{Bergman metric, K\"ahler--Einstein metric, finite ball quotient, Stein space, fixed-point-free representation}

\begin{document}

\begin{abstract}
For every \(N\ge2\), we prove that the Bergman metric on the regular locus of a finite ball quotient \(\Bball^N/\Gamma\), where
\(\Gamma\subset\U(N)\) is finite and fixed-point-free, is
K\"ahler--Einstein if and only if \(\Gamma\) is trivial. Consequently, if \(\Omega\) is an \(N\)-dimensional normal Stein space with isolated singularities and compact, smooth, strongly pseudoconvex boundary admitting a real-algebraic CR realization, then the Bergman metric on \(\Omega_{\Reg}\) is K\"ahler--Einstein if and only if \(\Omega\) is biholomorphic to \(\Bball^N\). This proves an algebraic version of the
Cheng--Huang--Xiao conjecture in every complex dimension \(N\ge2\).
\end{abstract}

\maketitle

\section{Introduction}\label{sec:intro}

The Bergman kernel and Bergman metric were introduced by S. Bergman
\cite{BergmanpaperonBergmankernel1,BergmanpaperonBergmankernel2} for
domains in \(\mathbb C^n\) and were later extended to complex manifolds by
S. Kobayashi \cite{GeometryboundeddomKobayashi1959}. The biholomorphic
invariance of the Bergman metric makes it a canonical K\"ahler metric in
complex geometry. Recently, there has been considerable progress in
classifying complex spaces through characterizations of their Bergman
kernels
\cite{EXX2023algebraicC2,EXX2024algebraicBergker,EGXlocalalgebraicity2025}.

Here we focus instead on classification results formulated in terms of
the Bergman metric. Every bounded strongly pseudoconvex domain carries a
natural complete K\"ahler--Einstein metric, whose existence was proved by
S.-Y. Cheng and S.-T. Yau \cite{ChengYaumetric1980}. The Cheng--Yau
metric is obtained by solving a Monge--Amp\`ere equation and is unique up
to constant scaling. Yau conjectured that the Cheng--Yau metric of a
bounded pseudoconvex domain coincides with its Bergman metric if and only
if the domain is homogeneous \cite{Semdiffgeo1982}, that is, if its
automorphism group acts transitively. This is still an open conjecture in complex geometry. 

	A  homogeneous strongly pseudoconvex bounded domain with a smooth boundary is biholomorphic to the unit ball $\mathbb{B}^n$ \cite{Wong1977}. Fu-Wong \cite{FuWong1997KEbergmanmetric}, Nemirovski-Shafikov \cite{Nemirovski_2006_ConjChengRama} and Huang-Xiao \cite{HuangXiao2021Chengalld} proved an older conjecture of Cheng \cite{ChengConjectureoriginal} which states that Yau's conjecture holds for strongly pseudoconvex domains with smooth boundary. Huang-Li \cite{HuangLi2023KEEisBergStein} extended this conjecture to Stein manifolds and proved that  \textit{the only Stein manifold with smooth and compact strongly pseudoconvex boundary for which the Bergman metric is K\"ahler-Einstein is the unit ball $\mathbb{B}^n$ up to biholomorphism}. Huang--Xiao then formulated the following generalization of Cheng's conjecture for Stein spaces with isolated normal singularities:
	
	\begin{conjecture}[\cite{HuangXiaoUniformization2020}]
		Let $\Omega$ be a normal Stein space with a compact, smooth, strongly pseudoconvex boundary. Then the Bergman metric on the regular part of $\Omega$ is K\"ahler-Einstein if and only if $\Omega$ is biholomorphic to the unit ball in a complex Euclidean space. 
	\end{conjecture}
	
The singular case is not a formal consequence of the manifold result:
the Bergman metric is defined only on the regular locus, and its behavior
near the singular set is not controlled by boundary asymptotics alone.
In this paper, we prove the following algebraic version of this
conjecture; the two-dimensional case was established in earlier joint
work of the first author \cite{GangulySinha2026ChengStein2d}.
 
\begin{theorem}
\label{thm:main-stein}
Let \(\Omega\) be a normal Stein space of dimension \(N\ge2\), with
isolated singularities and compact, smooth, strongly pseudoconvex
boundary. Assume that \(\partial\Omega\) is CR equivalent to an algebraic
CR manifold in a complex Euclidean space. Then the Bergman metric on the regular part of
\(\Omega\) is K\"ahler--Einstein if and only if \(\Omega\) is
biholomorphic to \(\Bball^N\).
\end{theorem}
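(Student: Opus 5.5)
The proof will go through the reduction to finite ball quotients described in the abstract. Assume the Bergman metric on \(\Omega_{\Reg}\) is K\"ahler--Einstein; the converse direction is classical, since the Bergman metric of \(\Bball^N\) is a multiple of the hyperbolic metric.

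\textbf{Step 1: reduction to a ball quotient.} Using the Huang--Xiao analysis \cite{HuangXiaoUniformization2020,HuangXiao2021Chengalld}, I will show that \(\partial\Omega\) is spherical. The Bergman metric is complete and K\"ahler--Einstein near the boundary, so its boundary expansion must agree with the Cheng--Yau/Fefferman expansion; the log term then vanishes, and by the Chern--Moser-type argument of those papers the boundary is locally spherical. The algebraicity hypothesis is what upgrades this to a global statement. A compact, strongly pseudoconvex, spherical CR manifold that is CR equivalent to a real-algebraic one has algebraic local CR maps to the sphere, and these extend along paths with finite branching. The algebraic extension argument used in the two-dimensional case \cite{GangulySinha2026ChengStein2d} and by Huang--Li then shows that \(\partial\Omega\cong S^{2N-1}/\Gamma\) for a finite \(\Gamma\subset\U(N)\) acting freely on the sphere. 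Since \(\Omega\) is normal with isolated singularities, filling in gives \(\Omega\cong\Bball^N/\Gamma\), with \(\Gamma\) fixed-point-free away from the origin. This uses Hartogs-type extension of the CR diffeomorphism to a biholomorphism of the regular parts, followed by normality.

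\textbf{Step 2: Bergman metric of \(\Bball^N/\Gamma\).} The \(L^2\) holomorphic \(N\)-forms on \((\Bball^N/\Gamma)_{\Reg}\) are the \(\Gamma\)-invariant \(L^2\) \(N\)-forms on \(\Bball^N\setminus\{0\}\), and these extend across \(0\). Hence the Bergman kernel is
\[
K_\Gamma(z,\bar w)=\frac{1}{|\Gamma|}\sum_{\gamma\in\Gamma}K_{\Bball}(z,\overline{\gamma w})\,\overline{\det\gamma},
\]
with \(K_{\Bball}(z,\bar w)=c\,(1-\langle z,w\rangle)^{-N-1}\). The K\"ahler--Einstein condition \(\det(\partial\bar\partial\log K_\Gamma)=c'K_\Gamma\) is real-analytic, so it can be tested near \(z=0\). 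Expanding in homogeneous degrees and using the Molien-type sums \(\sum_\gamma\overline{\det\gamma}\langle z,\gamma z\rangle^k\), the lowest-order nontrivial invariant piece of \(K_\Gamma\) forces a relation among traces of \(\Gamma\).

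\textbf{Step 3 (main obstacle): the group-theoretic argument.} This is the step I expect to be hardest. Write \(K_\Gamma=c(1-|z|^2)^{-N-1}\,(1+h)\), where \(h\) collects the contributions of the nonidentity elements. I would first try to show that the Einstein equation forces \(\log(1+h)\) to be pluriharmonic in a suitable sense. If so, \(h\) must vanish identically, since otherwise its lowest homogeneous term \(\sum_{\gamma\ne1}\overline{\det\gamma}\langle z,\gamma z\rangle^{m}\) would be a nonzero real \((m,m)\)-form, and such a form is not pluriharmonic.

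To see that this term is nonzero I will use fixed-point-freeness. Every \(\gamma\ne1\) has no eigenvalue \(1\), and the character theory of fixed-point-free groups (Wolf's classification, cyclic Sylow structure) lets one choose \(m\) so that the averaged polynomial does not cancel. Pairing this term against \(|z|^{2m}\) on the sphere gives traces of symmetric powers, \(\sum_\gamma\overline{\det\gamma}\,\tr S^m\gamma\). Nonvanishing of this quantity for a suitable \(m\) is exactly the step that needs representation theory, and the \(N=2\) case of \cite{GangulySinha2026ChengStein2d} suggests it can be done by a direct check. From \(h\equiv0\) it follows that \(\Gamma\) is trivial, so \(\Omega\cong\Bball^N\).
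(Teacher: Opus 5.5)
Your Step 1 is fine at the level of citation; the paper does the same via Huang--Li, Corollary~4.2. The kernel formula in Step 2 also matches the paper's potential $\phi_{\overline\Gamma}$ up to normalization.

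\textbf{The gap is in Step 3.} The claim that the Einstein equation forces $\log(1+h)$ to be pluriharmonic is not a preliminary step: it is the whole theorem, and you give no mechanism for it. Write $u=\log(1+h)$ and $g_0$ for the ball metric. The equation then reads $\det(g_0+\partial\bar\partial u)=c\,e^{u}\det g_0$. So if $u$ were pluriharmonic, $u$ would be constant, and letting $|z|\to1$ would give $h\equiv0$ directly. Your lowest-order analysis afterwards is therefore redundant.

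The natural source of such rigidity is Cheng--Yau uniqueness via Yau's Schwarz lemma. That argument needs a complete K\"ahler--Einstein metric on all of $\Bball^N$, whereas the pulled-back Bergman metric lives only on $\Bball^N\setminus\{0\}$. This is harmless only when $\det|_\Gamma\equiv1$: then $\phi(0)=|\Gamma|\ne0$ and the metric extends across $0$. If $\det|_\Gamma$ is nontrivial (e.g.\ $\Gamma=\{\pm I\}$ with $N$ odd), then $1+h(0)=\sum_{\gamma}\overline{\det\gamma}=0$. In that case $\log(1+h)$ is not even defined at the origin, and the metric degenerates at the cone point. This is exactly why the singular case is not a formal consequence of the manifold case.

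Your ``lowest homogeneous term'' of $h$ is also misidentified. The true lowest term is the nonzero constant $h(0)=\sum_{\gamma\ne1}\overline{\det\gamma}\in\{-1,|\Gamma|-1\}$. The higher terms mix with powers of $|z|^2$ coming from the factor $(1-|z|^2)^{N+1}$.

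\textbf{Step 2 does not supply a mechanism either.} Let $c_m$ be the lowest nonzero bihomogeneous part of the potential at $0$. One has $m\ge1$ exactly when $\det|_\Gamma\not\equiv1$. In that case Euler's relation, applied to $c_m$ and to each $\partial_{\bar z_j}c_m$, shows that the first row of $M(c_m)$ equals $m^{-1}\sum_i z_i$ times the remaining rows. Hence $\det M(c_m)\equiv0$: the leading order of the bordered determinant cancels identically, and no relation among traces appears there. You would have to control higher-order cross terms, which you do not do.

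Conversely, the step you flag as needing Wolf's classification is elementary. One has $\sum_\gamma\overline{\det\gamma}\,\langle z,\gamma z\rangle^m=|\Gamma|\,\|P_mz^{\otimes m}\|^2$, where $P_m$ is the orthogonal projection of $S^m\C^N$ onto an isotypic component. Removing the identity term $|z|^{2m}$ leaves zero only if $P_m=|\Gamma|^{-1}I$, which is impossible for a projection when $|\Gamma|\ge2$.

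\textbf{How the paper proceeds.} It avoids the origin entirely. It complexifies the equation to the rational identity $F_\Gamma(z,w)\equiv0$ and studies poles along the hypersurfaces $H_\gamma=\{z^{\trans}\gamma w=1\}$. The leading coefficient along $H_I$ is $G_\Gamma=\tr(AD)$. Take a point of $H_I\cap H_\eta$, with $\eta$ non-scalar, lying on no other $H_\delta$. There $G_\Gamma$ has a pole of order $N+3$ in $h_\eta$ with coefficient $(\det\eta)(N+2)(1-z^{\trans}\eta^2w)\ne0$. The scalar case is covered by Ebenfelt--Xiao--Xu, or by the one-variable argument in the appendix.
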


The reverse implication in Theorem~\ref{thm:main-stein} follows from
the fact that the Bergman metric of \(\Bball^N\) is
K\"ahler--Einstein. For the forward implication,
\cite[Corollary~4.2]{HuangLi2023KEEisBergStein}
(see also \cite{EXX2022ChengSteinabelian,HuangCRlink2006})
shows that, under the
hypotheses of Theorem~\ref{thm:main-stein}, \(\Omega\) is biholomorphic
to a finite ball quotient \(\Bball^N/\Gamma\), where
\(\Gamma\subset\Aut(\Bball^N)\) acts fixed-point-freely on
\(\partial\Bball^N\). After conjugating the action by an automorphism
of \(\Bball^N\), we may assume that \(\Gamma\subset\U(N)\). Recall that a finite subgroup \(\Gamma\subset\U(N)\) is called
\emph{fixed-point-free} if no nonidentity element of \(\Gamma\) fixes
a point of \(\partial\Bball^N\). Equivalently,
\[
1\notin\Spec(\gamma)
\qquad
\text{for every }\gamma\in\Gamma\setminus\{I\}.
\]
Thus Theorem~\ref{thm:main-stein} reduces to the following
finite-quotient theorem.

\begin{theorem}
\label{thm:main-ball-quotient}
Let \(\Gamma\subset\U(N)\), \(N\ge2\), be a nontrivial finite fixed-point-free
subgroup.  Then the Bergman metric on the ball quotient \(\Bball^N/\Gamma\) is not
K\"ahler--Einstein.
\end{theorem}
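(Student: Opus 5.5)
The plan is to write the Bergman kernel of \(\Bball^N/\Gamma\) explicitly as a finite sum over \(\Gamma\), and then show that the Kähler--Einstein equation, expanded in Taylor series at the origin (a \(\Gamma\)-fixed interior point), is incompatible with the correction terms coming from \(\gamma\neq I\).

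\emph{Step 1: the kernel.} Bergman-integrable holomorphic \(N\)-forms on \((\Bball^N/\Gamma)_{\Reg}\) pull back to \(\Gamma\)-invariant \(L^2\) holomorphic \(N\)-forms \(f\,dz_1\wedge\cdots\wedge dz_N\) on \(\Bball^N\). Removability across the preimage of the singular set holds because it is a finite set and \(N\ge2\). Invariance means \(f(\gamma z)\det\gamma=f(z)\). Averaging the ball kernel against the character \(\chi(\gamma)=\det\gamma\) (or its conjugate, depending on conventions) gives, on the diagonal,
\[
K(z)=c\sum_{\gamma\in\Gamma}\frac{\chi(\gamma)}{(1-\langle z,\gamma z\rangle)^{N+1}}
=\frac{c}{(1-|z|^2)^{N+1}}\,(1+h(z)),
\qquad
h=\sum_{\gamma\ne I}\chi(\gamma)\frac{(1-|z|^2)^{N+1}}{(1-\langle z,\gamma z\rangle)^{N+1}}.
\]
Since \(1\notin\Spec(\gamma)\) for \(\gamma\neq I\), we have \(|\langle z,\gamma z\rangle|<1\) on \(\overline{\Bball^N}\). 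Hence each summand of \(h\) is real-analytic up to the boundary and vanishes to order \(N+1\) there.

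\emph{Step 2: the equation.} Put \(\varphi=\log(1+h)\) and \(g=\partial\bar\partial\log K=g_{\Bball}+\partial\bar\partial\varphi\). Assume \(g\) is Kähler--Einstein. Comparing with the ball near \(\partial\Bball^N\), where \(\varphi=O(\rho^{N+1})\), fixes the Einstein constant to be that of \(g_{\Bball}\). Then \(\log(\det g/K)\) is a \(\Gamma\)-invariant pluriharmonic function that tends to a constant at the boundary, so it is constant. This yields the identity
\[
\log\det\bigl(g_{\Bball}+\partial\bar\partial\varphi\bigr)-\log\det g_{\Bball}=\varphi+\text{const}\quad\text{on }\Bball^N .
\]

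\emph{Step 3: Taylor expansion at \(0\).} Both sides are real-analytic near \(0\), so I would expand in bihomogeneous components. Write \(\varphi=\sum\varphi_{a,b}\). Using the \(\U(N)\)-invariance of \(g_{\Bball}\), the left side equals \(\Delta\varphi\) (the Euclidean Laplacian, suitably normalized) plus terms that are either quadratic in \(\partial\bar\partial\varphi\) or carry extra powers of \(|z|^2\). Let \(d\) be the lowest total degree at which \(\varphi\) has a non-pluriharmonic component \(P\); pluriharmonic pieces of bidegree \((a,0)\) and \((0,b)\) are killed by \(\partial\bar\partial\). Reading the identity in degree \(d-2\) then forces \(\Delta P=0\), with \(P\) computed explicitly from
\[
Q_k(z)=\sum_{\gamma\ne I}\chi(\gamma)\langle z,\gamma z\rangle^k
\]
and the binomial expansion of \((1-|z|^2)^{N+1}\). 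Note that \(\sum_{\gamma\in\Gamma}\chi(\gamma)\langle z,\gamma z\rangle^k\) is \(|\Gamma|\) times the reproducing kernel of the \(\chi\)-isotypic part of \(\mathrm{Sym}^k\). This is the point where representation theory enters: the isotypic kernels are nonnegative, with \(\mathrm{Sym}^0\) never \(\chi\)-isotypic when \(\chi\ne1\). The goal is to show that the first nonvanishing correction cannot be harmonic. My first attempt would be to pair \(P\) with \(|z|^{2}\)-weighted Fischer inner products. In the Fischer product, harmonic polynomials are orthogonal to \(|z|^2\cdot(\text{anything})\), while \(P\) should have a strictly nonzero projection onto \(|z|^{2}\mathrm{Sym}^{k-1}\otimes\overline{\mathrm{Sym}^{k-1}}\), coming from the positivity of the isotypic kernels.

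\emph{Main obstacle.} The near-boundary analysis alone is degenerate. A perturbation of the defining function of order \(\rho^{N+2}\) lies exactly at the Fefferman obstruction order, where the linearized Monge--Ampère operator has vanishing indicial coefficient. This is why I move to the interior and expand at \(0\). The hard step is therefore Step 3: controlling cancellations among the terms \(\chi(\gamma)\langle z,\gamma z\rangle^k\) for \(\gamma\neq I\), including the case \(\chi\equiv1\) on \(\Gamma\). This must be done uniformly over all fixed-point-free groups, presumably by isolating the lowest \(k\) at which the \(\chi\)-isotypic projection on \(\mathrm{Sym}^k\) differs from its "trivial" value and checking that the resulting \(P\) has nonzero trace \(\Delta P\).
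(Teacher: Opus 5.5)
Your Steps 1--2 are essentially sound. There is one slip: fixed-point-freeness gives \(\langle z,\gamma z\rangle\neq1\) on \(\overline{\Bball^N}\), not \(|\langle z,\gamma z\rangle|<1\); take \(\gamma=-I\), \(|z|=1\). The genuine gap is Step 3, which carries the entire content of the theorem. You do not carry it out, and it rests on a false premise: near the origin, \(h\) is not a small perturbation. Indeed
\[
1+h=(1-|z|^2)^{N+1}\sum_{\gamma}\chi(\gamma)(1-\langle z,\gamma z\rangle)^{-(N+1)},
\]
so \(1+h(0)=\sum_\gamma\chi(\gamma)\in\{0,|\Gamma|\}\). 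This breaks the expansion in both cases.
\begin{itemize}
\item If \(\chi|_\Gamma\not\equiv1\), then \(K(0)=0\) and \(\varphi\) has a logarithmic singularity at \(0\).
\item If \(\chi|_\Gamma\equiv1\), then \(\mathrm{Sym}^1\) has no \(\Gamma\)-invariants, so \(K=c|\Gamma|(1+O(|z|^4))\). Hence \(g=g_{\Bball}+\partial\bar\partial\varphi=\partial\bar\partial\log K\) vanishes at \(0\): \(\partial\bar\partial\varphi\) cancels \(g_{\Bball}\) there, and \(\log\det g\to-\infty\).
\end{itemize}
Either way, your identity is not an equality of real-analytic functions at \(0\). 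The linearization ``\(\Delta\varphi\) plus quadratic terms'' is invalid there, so the \(\Delta P=0\)/Fischer-product mechanism has nothing to act on.

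For \(\chi|_\Gamma\equiv1\), the degeneracy actually gives a quick contradiction, because \(\log\det g-\log K\) extends pluriharmonically across \(0\) when \(N\ge2\). For nontrivial \(\chi\) this fails. For example, with scalar \(\Gamma=\mu_\ell\) and \(\ell>N\), \(\det g\) and \(K\) vanish to the same order at \(0\). This is the same leading behaviour as in dimension one, where every disk quotient is K\"ahler--Einstein. An origin analysis would therefore have to control higher-order terms uniformly over all fixed-point-free groups, and nothing in your proposal does that.

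The paper never looks at the origin. It complexifies the K\"ahler--Einstein condition to the rational identity \(F_\Gamma(z,w)=\det M(\phi_\Gamma)-(N+1)^N\phi_\Gamma^{N+2}\equiv0\) on \(\C^N\times\C^N\). It then expands along the complexified sphere \(H_I=\{z^{\trans}w=1\}\). There the \(h^{N+1}\) terms cancel, which is exactly the degeneracy you noticed. The next coefficient \(G_\Gamma=\tr(AD)\) survives, so the boundary route is not dead one order further out.

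When \(\Gamma\) contains a non-scalar \(\eta\), the paper proves \(G_\Gamma\not\equiv0\) by exhibiting a genuine pole at a point \(p\in H_I\cap H_\eta\) that lies on no other \(H_\gamma\). This is a complex ``virtual fixed point'' with no real counterpart, precisely because \(\Gamma\) is fixed-point-free. The all-scalar case is handled separately, either by Ebenfelt--Xiao--Xu or by using a nonzero zero of the one-variable potential \(f\).
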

This phenomenon is specific to dimensions at least two.  In dimension one, the
Bergman metric on the regular part of any finite disk quotient
\(\Bball^1/\Gamma\) is K\"ahler--Einstein
\cite{HuangLi2023KEEisBergStein}. Theorem~\ref{thm:main-ball-quotient} removes both the abelian hypothesis in the finite quotient theorem of
Ebenfelt--Xiao--Xu \cite{EXX2022ChengSteinabelian} and the two-dimensional
restriction in \cite{GangulySinha2026ChengStein2d}.  Although finite fixed-point-free unitary groups are classified by Wolf, with later simplifications by
Stepanov \cite{WolfSpacesofconstantcurvature2011,Stepanov2014wolfsimplified},
our proof does not use this classification.  Instead, it extracts a
valuation-theoretic obstruction directly from the polar divisor of the
complexified quotient Bergman kernel.

\subsection*{Outline of the proof}
After pulling the quotient Bergman kernel back to the ball and complexifying the real-analytic variables, the K\"ahler--Einstein condition becomes the rational identity:
\[
        F
        :=
        \det M(\phi)-(N+1)^N\phi^{N+2}
        \equiv 0
        \qquad\text{in }\C(z,w),
\]
where the potential \(\phi(z,w) =\phi_\Gamma(z,w)\) is defined by an explicit sum over the group \(\Gamma\), and \(M(\phi)\) is the bordered Monge--Amp\`ere matrix. Thus the problem is to prove that $F$ is not identically zero. The proof divides into two cases based on whether or not \(\Gamma\) contains a non-scalar matrix.

If $\Gamma$ does contain a non-scalar matrix $\eta$ then we employ an iterated valuation argument. Consider the hypersurfaces \(H_\gamma = \{1 - z^{\trans}\gamma w = 0\}\) for each group element $\gamma$, with \(H_I\) corresponding to the identity. Restricting  \(F\) along \(H_I\) yields a leading term \(G\). To further analyze \(G\) we construct a distinguished point \(p \in H_I \cap H_\eta\), which lies on no other hypersurface \(H_\gamma\). This means exactly two summands of the potential $\phi$ have a pole at $p$, and a direct computation shows that \(G\) has a genuine pole at \(p\). This proves \(G\not\equiv0\) and hence that \(F\not\equiv0\) whenever \(\Gamma\) contains a non-scalar element.

If \(\Gamma\) consists entirely of scalar multiples of $I$, then it is abelian. For abelian \(\Gamma\), Theorem~\ref{thm:main-ball-quotient} is already known by the work of Ebenfelt--Xiao--Xu \cite{EXX2022ChengSteinabelian}, which finishes the proof. 
However, for the sake of completeness, we provide a short alternative argument in the appendix. In the scalar case, the rational functions $\phi$ and $F$ depend only on \(s = z^{\trans}w\). We construct a point $b\ne0$ at which $\phi(b)=0$, and we show that the rational function $F/\phi^{N+2}$ has a genuine pole at $s=b$, thus proving that $F\not\equiv0$.

\begin{remarks}
\label{rem:intro-reformulations-extensions}

\begin{enumerate}
\normalfont
\item
Lempert's algebraic approximation theorem
\cite{LempertAlgApproxAnalGeo1995} shows that a relatively compact domain in a
reduced Stein space with only isolated singularities is biholomorphic to a
domain in an affine algebraic variety.  This
motivates the following affine-algebraic formulation of Theorem~\ref{thm:main-stein}:  Let \(V\) be an \(N\)-dimensional
affine algebraic variety, \(N\ge2\), and let \(G\Subset V\) be a relatively
compact domain such that \(\partial G\subset V_{\Reg}\).  Assume that \(V\) has
at most isolated normal singularities in \(G\), and that \(\partial G\) is smooth, strongly pseudoconvex, and algebraic.  Then the Bergman
metric on \(G_{\Reg}\) is K\"ahler--Einstein if and only if
\(G\) is biholomorphic to \(\Bball^N\).

\item
There are several recent extensions of Cheng's conjecture beyond the strongly
pseudoconvex smooth-boundary setting.  In complex dimension two,
Savale--Xiao extended Cheng's conjecture to smoothly bounded pseudoconvex
domains of finite type \cite{SavaleXiao2023KEBergmanFiniteType}.  More
recently, Hsiao--Huang--Li proved the Cheng--Yau conjecture for bounded
real-analytic pseudoconvex domains in \(\C^n\)
\cite{HuangHsiaoLi2026LocalizationChengYau}.  Sha extended the rigidity conclusion to Bergman metrics of constant
scalar curvature on bounded smoothly strongly pseudoconvex domains
\cite{sha2026rigiditycompletekahlereinsteinmetrics}.  At the end of
Section~\ref{sec:distinguished-points-and-poles}, after the proof of
Theorem~\ref{thm:main-ball-quotient}, we mention some finite-type pseudoconvex variants of
Theorem~\ref{thm:main-stein}.

\end{enumerate}
\end{remarks}

\subsection*{Acknowledgments}
The first author would like to thank Peter Ebenfelt, Ming Xiao, and
Xiaojun Huang for many enlightening discussions concerning this project. The work of the second author is partially supported by the Simons Foundation under grant number 00006698.


\section{Preliminaries}\label{sec:prelimsballquo}

We recall that a metric on a Riemannian manifold is said to be Einstein if its Ricci curvature is proportional to the metric. For a K\"ahler manifold the Einstein condition can be expressed as an equation involving the K\"ahler potential. In the case of the Bergman metric on a ball quotient $M=\Bball^N/\Gamma$, the K\"ahler--Einstein equation can be rewritten in terms of Fefferman's bordered Monge--Amp\`ere type determinant \cite{FeffermanMongeAmpere1976,FeffermanErrataMongeAmpere1976,EXX2022ChengSteinabelian}, which we now recall.

If \(u=u(z,w)\) is a rational function on $\C^N\times \C^N$ then we write $u_z= (u_{z_i})$ and $u_w =(u_{w_j})$ for the column vectors of first derivatives, $u_{zw}=(u_{z_iw_j})$ for the matrix of mixed derivatives, and we define 
\begin{equation}
\label{eq:complexified-Mu}
M(u):=
\begin{bmatrix}
u & u_w^\trans\\
u_{z} & u_{zw}
\end{bmatrix},
 \qquad F(u):=\det M(u)-(N+1)^N u^{N+2}.
\end{equation}
Next, if $\gamma$ is a unitary matrix and $\Gamma$ is a finite subgroup of $U(N)$, then we define \begin{equation}
\label{eq:complexified-phi}
h_\gamma:=1-z^{\trans}\gamma w, \qquad \phi_\Gamma(z,w):=
\sum_{\gamma\in\Gamma}
\det(\gamma) h_\gamma^{-(N+1)}, \qquad F_\Gamma :=F(\phi_\Gamma).
\end{equation}

\begin{proposition}[\cite{EXX2022ChengSteinabelian}]
\label{prop:EXX}
Let \(\Gamma\subset\U(N)\) be finite and fixed-point-free, and set
\[
\overline\Gamma:=\{\bar\gamma:\gamma\in\Gamma\}.
\]
Then the Bergman metric on the regular part of
\(\Bball^N/\Gamma\) is K\"ahler--Einstein if and only if
\begin{equation}
\label{eq:KEBnquover1}
F_{\overline\Gamma}(z,\bar z)\equiv0.
\end{equation}
\end{proposition}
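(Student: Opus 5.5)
The plan is to start from the Bergman kernel of the quotient and reduce the Einstein condition to Fefferman's determinant. First I will write the Bergman kernel of the regular locus $M_{\Reg}$, $M=\Bball^N/\Gamma$. Since $\Gamma$ is fixed-point-free on $\partial\Bball^N$ and $N\ge2$, the singular set is at most the image of the origin, which has codimension $N\ge 2$. So $L^2$ holomorphic $N$-forms on $M_{\Reg}$ extend across it, and they correspond exactly to $\Gamma$-invariant $L^2$ holomorphic $N$-forms on $\Bball^N$, that is, $f\,dz_1\wedge\cdots\wedge dz_N$ with $f(\gamma z)\det\gamma=f(z)$.

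Averaging the ball kernel $K(z,w)=c_N(1-z^{\trans}\bar w)^{-(N+1)}$ with the character $\det$ gives the pulled-back kernel
\[
K_\Gamma(z,\bar z)=\frac{c_N}{|\Gamma|}\sum_{\gamma\in\Gamma}\overline{\det\gamma}\,(1-z^{\trans}\overline{\gamma z})^{-(N+1)} .
\]
Comparing with the definition of $\phi$, this equals a constant times $\phi_{\overline\Gamma}(z,\bar z)$, because $z^{\trans}\overline{\gamma z}=z^{\trans}\bar\gamma\bar z$ and $\overline{\det\gamma}=\det\bar\gamma$. The first step is to verify this formula carefully, including the choice between $\det$ and its conjugate, which is fixed by how forms transform.

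Next I will work on the ball minus the finite preimage of the singular points, where the Bergman metric is $\omega=i\partial\bar\partial\log K$ with $K=K_\Gamma$. The Kähler–Einstein condition is $\mathrm{Ric}(\omega)=-\lambda\omega$. Writing $\mathrm{Ric}=-i\partial\bar\partial\log\det(g_{i\bar j})$ with $g=\partial\bar\partial\log K$, the condition becomes that $\log\bigl(\det(\partial\bar\partial\log K)\,K^{-\lambda}\bigr)$ is pluriharmonic.

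Two inputs fix the constants. The boundary asymptotics of $K$, governed by the identity term, force $\lambda=1$, as on the ball. For the pluriharmonic function, I would use the standard identity
\[
\det(\partial\bar\partial\log K)=K^{-(N+1)}\det\begin{bmatrix}K&K_{\bar z}^{\trans}\\ K_z&K_{z\bar z}\end{bmatrix},
\]
so the condition says $J(K)/K^{N+2}=e^{h}$ with $h$ pluriharmonic. Both sides are $\Gamma$-invariant. Near the boundary the ratio tends to the constant $(N+1)^N$ of the ball, up to the normalization $c_N$, which I will absorb by rescaling $K$ to $\phi$.

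The main obstacle is showing that $h$ is actually constant. My approach is to observe that $|e^h|$ is a quotient of real-analytic functions extending across the origin. The potential singular point is isolated in dimension $\ge2$, so $h$ extends as a pluriharmonic function on the whole ball. It is bounded near $\partial\Bball^N$, and its boundary values are constant by the asymptotics, the off-identity terms being smooth up to the boundary precisely because $\gamma$ has no eigenvalue $1$. The maximum principle then gives $h$ constant.

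Hence the Einstein condition is equivalent to $F(\phi_{\overline\Gamma})(z,\bar z)\equiv0$. Conversely, if this identity holds, reversing the computation gives Ricci $=-\omega$. Finally, a real-analytic function of $(z,\bar z)$ vanishes identically if and only if its complexification vanishes, which justifies working in $\C(z,w)$ later.
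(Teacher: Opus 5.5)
Your proposal is correct and follows the same skeleton as the paper. You identify the pulled-back quotient kernel with a constant multiple of \(\phi_{\overline\Gamma}(z,\bar z)\), reduce the Einstein condition to \(\Phi_{\overline\Gamma}=(N+1)^N\phi_{\overline\Gamma}\), and pass to \(F_{\overline\Gamma}\) via the bordered determinant identity. The only difference is that you prove by hand what the paper imports from \cite{EXX2022ChengSteinabelian}: you compute the kernel by averaging \(\Gamma\)-invariant \(N\)-forms rather than through the branched-covering formula, and you establish Lemma~4.2 from the explicit boundary behavior of \(\phi_{\overline\Gamma}\) (which uses \(1\notin\Spec(\gamma)\)), together with Hartogs extension across the origin and the maximum principle.
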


The argument is carried out in \cite[Section~4.1]{EXX2022ChengSteinabelian}. Ebenfelt--Xiao--Xu use Cartan's basic map realizing the quotient and denote by \(Z\) the locus where this map is not of full rank; for a nontrivial fixed-point-free group, one has \(Z=\{0\}\). The quotient map \(\Pi:\Bball^N\longrightarrow \Bball^N/\Gamma\) is a local biholomorphism on \(\Bball^N\setminus Z\), and since the K\"ahler--Einstein condition is local, the equation may be checked after pulling the Bergman metric back to \(\Bball^N\setminus Z\). The Bergman kernel transformation formula for branched coverings, \cite[Theorem~2.3]{EXX2022ChengSteinabelian}, is then applied in \cite[Equation~(4.1)]{EXX2022ChengSteinabelian}. Substituting the explicit Bergman kernel of the ball into this transformation formula shows that, on the diagonal, the coefficient of the pulled-back Bergman kernel is \(\frac{N!}{\pi^N}\phi_{\overline\Gamma}(z,\bar z)\). Thus the pulled-back metric has local potential \(\log\phi_{\overline\Gamma}(z,\bar z)\). Writing \(\Phi_{\overline\Gamma}=\det(\partial_{z_i}\partial_{\bar z_j}\log\phi_{\overline\Gamma}(z,\bar z))\), Lemma~4.2 of \cite{EXX2022ChengSteinabelian} reduces the K\"ahler--Einstein equation to \(\Phi_{\overline\Gamma}=(N+1)^N\phi_{\overline\Gamma}(z,\bar z)\). The proposition then follows from the well-known determinant identity \(\Phi_{\overline\Gamma}=\det M(\phi_{\overline\Gamma})(z,\bar z)/\phi_{\overline\Gamma}(z,\bar z)^{N+1}\).

\medskip

The main  result that we prove in this paper is the following. 
\begin{theorem}\label{thm:phi-Gam}
If \(\Gamma\subset\U(N)\), \(N\ge2\), is a nontrivial finite fixed-point-free
subgroup, then \(F_\Gamma(z,w)\not\equiv0\).
\end{theorem}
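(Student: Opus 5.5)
The argument splits according to whether \(\Gamma\) contains a non-scalar element, as in the outline in the introduction.

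\emph{Non-scalar case.} Suppose some \(\eta\in\Gamma\) is not a scalar matrix. We study the pole of \(F_\Gamma\) along the irreducible hypersurface \(H_I=\{h_I=0\}\). Near a generic point of \(H_I\) the summand \(h_I^{-(N+1)}\) dominates \(\phi=\phi_\Gamma\). Write
\[
\phi=h_I^{-(N+1)}+\psi,
\]
where \(\psi\) is holomorphic near generic points of \(H_I\). Using the homogeneity of \(M\), we compute the leading Laurent coefficient of \(\det M(\phi)\) in \(h_I\). For the pure ball term \(\phi_0=h_I^{-(N+1)}\) we know \(F(\phi_0)\equiv0\), since this is the KE identity for the ball. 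So the top-order terms cancel, and the first surviving term is linear in \(\psi\): it is a linearized Monge--Amp\`ere operator applied to \(\psi\), restricted to \(H_I\). Call this leading coefficient \(G\). It is built from \(\psi\), its first derivatives, and its mixed second derivatives, with coefficients depending only on \(z,w\). The claim to prove is \(G\not\equiv0\) on \(H_I\).

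To see this, we choose a point \(p\in H_I\cap H_\eta\) avoiding every other \(H_\gamma\). Take \(z\) to be an eigenvector-related vector of \(\eta\), and choose \(w\) so that \(z^\trans w=1\) and \(z^\trans\eta w=1\). Because \(\eta\neq\) scalar, we can make these two linear conditions on \(w\) independent. Genericity of the remaining choice then avoids the finitely many hyperplanes \(z^\trans\gamma w=1\) for \(\gamma\neq I,\eta\). Near \(p\) the function \(\psi\) has the single singular summand \(\det(\eta)h_\eta^{-(N+1)}\), and this summand enters \(G\) linearly. So \(G\) has a pole along \(H_\eta\cap H_I\) at \(p\) of order up to \(N+3\), coming from \(\partial_z\partial_w h_\eta^{-(N+1)}\), with a coefficient we compute explicitly. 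We must check that this coefficient is nonzero at \(p\). It involves expressions like \(\eta^\trans z\), \(\eta w\) paired against \(w\), \(z\) (the conormal data of \(H_I\) and \(H_\eta\)). Transversality of \(H_I\) and \(H_\eta\) at \(p\), which is exactly where non-scalarity is used, should make it nonvanishing. A genuine pole of \(G\) at \(p\) gives \(G\not\equiv0\), hence \(F_\Gamma\not\equiv0\).

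\emph{Main obstacle.} The step I expect to be hardest is the explicit leading-coefficient computation: confirming that after the ball cancellation the linearized operator does not annihilate \(h_\eta^{-(N+1)}\) to top order on \(H_I\). This amounts to checking that the \(\eta\)-term is not itself a solution of the linearized equation along \(H_I\). I would first try to do it in coordinates adapted so that \(z=e_1\) and \(w\) lies in the span of two basis vectors. If the top pole order cancels, I would descend to the next order in \(h_\eta\).

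\emph{Scalar case.} If \(\Gamma\) consists of scalars \(\zeta I\), it is cyclic of order \(m\) and \(\zeta^m=1\). Fixed-point-freeness is automatic. Then \(\phi\) depends only on \(s=z^\trans w\):
\[
\phi(s)=\sum_{\zeta}\zeta^N(1-\zeta s)^{-(N+1)}.
\]
A direct computation of \(\det M\) for \(u=f(s)\) gives an explicit ODE expression
\[
\det M(u)=f'^{\,N-1}\bigl(f(f'+sf'')-sf'^2\bigr).
\]
So \(F\equiv0\) becomes a one-variable identity. I would find a nonzero root \(b\) of \(\phi\), which exists because \(\phi\) is not a pure power (for \(m\ge2\), \(\phi\) is a nonconstant rational function with numerator of positive degree). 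Then \(F/\phi^{N+2}\) has a pole at \(b\) unless \(f'(b)\) vanishes to compensate, which I would rule out by a multiplicity argument. This case is alternatively covered by Ebenfelt--Xiao--Xu's abelian theorem.
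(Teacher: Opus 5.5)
\textbf{Gap: the nonvanishing of the order-$(N+3)$ coefficient.} Your architecture is the paper's. You isolate, via $F(h_I^{-(N+1)})\equiv0$, a term linear in $\psi$ along $H_I$; in the paper this is $G_\Gamma=\tr(A\,M(\psi))$, with $A$ read off from $M_1^{-1}=(N+2)h^{N+1}A_0-h^{N+2}A$. You then look for a pole of order $N+3$ at a point $p\in H_I\cap H_\eta$ lying on no other $H_\gamma$. But the decisive step is left uncomputed, and the reason you give for it is wrong.

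Since $h_\eta^2B_\eta=\mathrm{diag}\bigl(0,(N+1)(N+2)\,\eta wz^\trans\eta\bigr)+h_\eta C$ with $C$ polynomial, the function $h_\eta^{N+3}G_\Gamma$ is regular near $p$ with value
\[
(\det\eta)(N+2)\bigl((z^\trans\eta w)^2-z^\trans\eta^2w\bigr)=(\det\eta)(N+2)\bigl(1-z^\trans\eta^2w\bigr).
\]
So the coefficient vanishes exactly when $p\in H_{\eta^2}$, and transversality of $H_I$ and $H_\eta$ does not control it. For the non-scalar involution $\eta=\mathrm{diag}(1,-1)$, the two hypersurfaces meet transversally at $z=(1,1)$, $w=(1,0)$. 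Yet $1-z^\trans\eta^2w=1-z^\trans w$ vanishes identically on $H_I$, and your fallback of descending in $h_\eta$ would then be an open-ended computation.

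The missing idea is that $\eta^2\notin\{I,\eta\}$. Indeed, $\eta^2=\eta$ forces $\eta=I$, while $\eta^2=I$ together with $1\notin\Spec(\eta)$ forces $\eta=-I$, which is scalar. Hence $\eta^2$ is one of the $\delta\ne I,\eta$ avoided by $p$, so $z^\trans\eta^2w\ne1$. This is exactly where the fixed-point-free hypothesis enters, and your non-scalar argument never uses it.

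\textbf{Secondary gaps.} The ``genericity'' that places $p$ off $H_\delta$ for $\delta\ne I,\eta$ also needs that hypothesis. The affine space $\{w:z^\trans w=1=z^\trans\eta w\}$ lies in $H_\delta$ whenever $z^\trans\delta$ is an affine combination of $z^\trans$ and $z^\trans\eta$. This happens for every $z$ if $\delta=(1-\mu)I+\mu\eta$; for instance, $\mathrm{diag}(1,i)=(1-\mu)I+\mu\,\mathrm{diag}(1,-1)$ with $\mu=(1-i)/2$. The paper rules this out as follows: the circle $\{\omega:|1-\mu+\mu\omega|=1\}$ would contain $1$ and two distinct eigenvalues $\omega_1,\omega_2\ne1$ of $\eta$, hence equal the unit circle, forcing $\mu=1$. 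It then sets $z^\trans=v^\trans(I-\eta)^{-1}$, with $v$ not a left eigenvector of $(I-\eta)^{-1}$ or of any $(I-\eta)^{-1}(I-\delta)$.

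Your scalar case is fine and matches the appendix, with one exception: ``numerator of positive degree'' does not give a root $b\ne0$. One needs $\deg P=N(\ell-1)>\ell-1\ge\operatorname{ord}_0P$. Once $b\ne0$, no separate argument about $f'(b)$ is needed. Writing $f=(s-b)^r u$ with $u(b)\ne0$ and $Y=f'/f$, the term $Y^{N-1}(sY)'/f$ has leading part $-br^N u(b)^{-1}(s-b)^{-(N+r+1)}$ for every multiplicity $r$.
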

This immediately implies Theorem \ref{thm:main-ball-quotient} as follows.

\begin{proof}[Proof of Theorem~\ref{thm:main-ball-quotient}] 
The anti-diagonal $\{(z,\bar z)\}$ is a maximal totally real subspace of $\C^N\times \C^N$. Therefore since $F_\Gamma(z,w)$ is a rational function, Theorem \ref{thm:phi-Gam} implies $F_\Gamma(z,\bar z) \not\equiv0.$  If $\Gamma$ is a nontrivial finite fixed-point-free subgroup of $U(N)$ then so is $\overline\Gamma$, and replacing $\Gamma$ by $\overline\Gamma$ we conclude \(F_{\overline \Gamma}(z,\bar z)\not\equiv0.\)  Proposition~\ref{prop:EXX} therefore implies that the Bergman
metric on \(\Bball^N/\Gamma\) is not K\"ahler--Einstein.
\end{proof}



\section{Restriction to a hypersurface}
\label{sec:restriction-hypersurface}

Our argument involves an analysis of the poles of $F_\Gamma$ from \eqref{eq:complexified-phi}, which lie along the hypersurfaces 
\begin{equation}\label{=hgam} H_\gamma := \{h_\gamma = 0\} = \{(z,w)\mid z^\trans\gamma w =1\}.
\end{equation} 

For \(\gamma\in \U(N)\), set
\[
\kappa_\gamma:=h_\gamma^{-(N+1)},\qquad
M_\gamma:=M(\kappa_\gamma),\qquad
a_\gamma:=(N+1)h_\gamma^{-1},
\]
and define matrices
\[
W_\gamma:=(\gamma w)(z^{\trans}\gamma),
\qquad
E_\gamma:=\gamma+h_\gamma^{-1}W_\gamma.
\]
Direct differentiation gives
\[
(\kappa_\gamma)_z
=a_\gamma\kappa_\gamma\gamma w,\qquad
(\kappa_\gamma)_w
=a_\gamma\kappa_\gamma\gamma^{\trans}z,\qquad
(\kappa_\gamma)_{zw}
=a_\gamma\kappa_\gamma
\bigl(a_\gamma W_\gamma+E_\gamma\bigr).
\]
Consequently,
\begin{equation}
\label{eq:M-gamma-formula}
M_\gamma=\kappa_\gamma B_\gamma,
\qquad
B_\gamma=
\begin{bmatrix}
1&a_\gamma z^{\trans}\gamma\\[2pt]
a_\gamma\gamma w&
a_\gamma\bigl(a_\gamma W_\gamma+E_\gamma\bigr)
\end{bmatrix}.
\end{equation}
The matrix \(B_\gamma\) has the block factorization
\begin{equation}
\label{eq:B-gamma-factorization}
B_\gamma=
\begin{bmatrix}
1&0\\
a_\gamma\gamma w&I
\end{bmatrix}
\begin{bmatrix}
1&0\\
0&a_\gamma E_\gamma
\end{bmatrix}
\begin{bmatrix}
1&a_\gamma z^{\trans}\gamma\\
0&I
\end{bmatrix}.
\end{equation}

We now apply this to the special case where $\gamma=I$ is the identity matrix, and to simplify notation we write 
$$ h:= h_I, \quad K =\kappa_I, \quad M_1 \coloneqq M_I= M(\kappa_I)$$

 \begin{lemma}
\label{lem:base-potential-eval}
We have $\det{M_1}=(N+1)^N h^{-(N+1)(N+2)}$ and 
\begin{equation}
\label{eq:MK-inverse}
M_1^{-1}=(N+2)h^{N+1}A_0 - h^{N+2} A, \; \text{ where }
A_0= \begin{bmatrix}
1 & 0\\
0 & 0 
\end{bmatrix}\text{ and }
A= \begin{bmatrix}
N+1 & z^{\trans}\\[4pt]
w & \frac{wz^{\trans}-I}{N+1}
\end{bmatrix}.
\end{equation}
\end{lemma}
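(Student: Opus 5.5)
We specialize \eqref{eq:M-gamma-formula} and \eqref{eq:B-gamma-factorization} to \(\gamma=I\). Write \(a=(N+1)h^{-1}\), \(W=wz^{\trans}\) and \(E=I+h^{-1}W\).

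\emph{The determinant.} From \(M_1=K B_I\) we get \(\det M_1=K^{N+1}\det B_I\). The two outer factors in \eqref{eq:B-gamma-factorization} are unipotent, so \(\det B_I=a^N\det E\). By the matrix determinant lemma,
\[
\det E=1+h^{-1}z^{\trans}w=1+h^{-1}(1-h)=h^{-1}.
\]
Hence
\[
\det M_1=h^{-(N+1)^2}(N+1)^Nh^{-N}h^{-1}=(N+1)^Nh^{-(N+1)(N+2)},
\]
since \((N+1)^2+N+1=(N+1)(N+2)\).

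\emph{The inverse.} Invert the factorization term by term:
\[
B_I^{-1}=\begin{bmatrix}1&-az^{\trans}\\0&I\end{bmatrix}\begin{bmatrix}1&0\\0&a^{-1}E^{-1}\end{bmatrix}\begin{bmatrix}1&0\\-aw&I\end{bmatrix}.
\]
Sherman--Morrison gives
\[
E^{-1}=I-\frac{h^{-1}W}{1+h^{-1}z^{\trans}w}=I-W,
\]
using \(h+z^{\trans}w=1\). Multiplying out the blocks:
\begin{itemize}
\item the \((1,1)\) entry is \(1+a^2z^{\trans}a^{-1}(I-W)w=1+a\,z^{\trans}(I-W)w\);
\item the \((1,2)\) block is \(-z^{\trans}(I-W)\);
\item the \((2,1)\) block is \(-(I-W)w\);
\item the \((2,2)\) block is \(a^{-1}(I-W)\).
\end{itemize}
Set \(s=z^{\trans}w=1-h\). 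Then \((I-W)w=w(1-s)=hw\) and \(z^{\trans}(I-W)=hz^{\trans}\). The \((1,1)\) entry becomes \(1+(N+1)s=(N+2)-(N+1)h\).

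Finally multiply by \(K^{-1}=h^{N+1}\):
\[
M_1^{-1}=h^{N+1}\begin{bmatrix}(N+2)-(N+1)h&-hz^{\trans}\\-hw&\frac{h}{N+1}(I-wz^{\trans})\end{bmatrix},
\]
which is exactly \((N+2)h^{N+1}A_0-h^{N+2}A\).

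The only delicate point is keeping the block ordering and transposes straight. The key simplification is \(z^{\trans}w=1-h\), which collapses both \(\det E\) and \(E^{-1}\). As a sanity check, one can verify \(M_1\cdot M_1^{-1}=I\) on the \((1,1)\) entry directly.
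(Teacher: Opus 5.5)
Your proof is correct and follows the paper's argument essentially step for step. You specialize the factorization to \(\gamma=I\), obtain \(\det E=h^{-1}\) and \(E^{-1}=I-W\), invert the three block factors, simplify with \(z^{\trans}(I-W)=hz^{\trans}\) and \((I-W)w=hw\), and multiply by \(K^{-1}=h^{N+1}\). The only difference is that you get \(E^{-1}\) from Sherman--Morrison, while the paper uses \(W^2=(1-h)W\); this is immaterial.
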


\begin{proof}
By formulas \eqref{eq:M-gamma-formula} and \eqref{eq:B-gamma-factorization} we get $M_1= KB$ where
\begin{equation} \label{eq:Afactorization}  B=\begin{bmatrix}
 1& az^\trans\\
aw & a(awz^\trans +E) \end{bmatrix}=
\begin{bmatrix}1&0\\ aw&I\end{bmatrix}
\begin{bmatrix}1&0\\0&aE\end{bmatrix}
\begin{bmatrix}1&az^{\trans}\\0&I\end{bmatrix}.
\end{equation}
where \(a:=(N+1)h^{-1}\), \(W:=wz^{\trans}\), and
\(E:=I+h^{-1}W\). 

Since \(W^2=(z^{\trans}w)W=(1-h)W\), one has
\(E(I-W)=I\), and hence \(E^{-1}=I-W\). The rank-one determinant formula also gives
\[
\det E=1+h^{-1}z^{\trans}w=h^{-1}.
\]
Thus \(\det B=a^N\det E=(N+1)^Nh^{-(N+1)}\) and, since \(M_1\) is an
\((N+1)\times(N+1)\) matrix, we get
\[
\det M_1
=h^{-(N+1)^2}\det B
=(N+1)^Nh^{-(N+1)(N+2)}. 
\]

Inverting \eqref{eq:Afactorization} and using
\(z^{\trans}(I-W)=hz^{\trans}\) and \((I-W)w=hw\), we obtain
\[
B^{-1}=
\begin{bmatrix}
1+(N+1)z^{\trans}w & -hz^{\trans}\\
-hw & \dfrac{h}{N+1}(I-wz^{\trans})
\end{bmatrix}.
\]
Since \(z^{\trans}w=1-h\), the upper-left entry is
\(1+(N+1)z^{\trans}w=N+2-(N+1)h\). Thus \(B^{-1}=(N+2)A_0-hA\), with \(A_0, A\) as in
\eqref{eq:MK-inverse}, and consequently
\(M_1^{-1}=h^{N+1}B^{-1}=(N+2)h^{N+1}A_0 - h^{N+2} A\).
\end{proof}

We will use the previous result to study the restriction of $F_\Gamma$ to the set 
\begin{equation}
H_\Gamma:=H_I\setminus\bigcup_{\gamma \ne I}H_\gamma.
\end{equation}
First we show that $H_\Gamma$ is not empty.

\begin{lemma}
\label{lem:nonidentity-regular-on-HI} If $\Gamma$ is a finite subgroup of $U(N)$ then $H_\Gamma$ is nonempty.
\end{lemma}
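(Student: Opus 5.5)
Since \(H_I\) is an irreducible hypersurface in \(\C^N\times\C^N\) (the polynomial \(1-z^{\trans}w\) is irreducible) and there are only finitely many \(\gamma\in\Gamma\), it suffices to show that for each \(\gamma\ne I\) the intersection \(H_I\cap H_\gamma\) is a proper subset of \(H_I\), i.e.\ that \(h_\gamma\) does not vanish identically on \(H_I\). Indeed, each \(H_I\cap H_\gamma\) is then a proper Zariski-closed subset of the irreducible variety \(H_I\), and a finite union of proper closed subsets of an irreducible variety is still proper. So \(H_\Gamma\) is nonempty (in fact Zariski dense in \(H_I\)). Note that no fixed-point-free hypothesis is needed, only \(\gamma\ne I\).

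To show \(h_\gamma\not\equiv0\) on \(H_I\) for \(\gamma\ne I\), I would argue directly by exhibiting a point. Since \(\gamma\ne I\), the bilinear forms \(z^{\trans}w\) and \(z^{\trans}\gamma w\) are distinct. Hence there exist vectors \(z_0,w_0\) with \(z_0^{\trans}w_0\ne z_0^{\trans}\gamma w_0\).

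First suppose we can choose them with \(z_0^{\trans}w_0\ne0\). Rescaling \(w_0\) so that \(z_0^{\trans}w_0=1\) puts \((z_0,w_0)\in H_I\), while \(z_0^{\trans}\gamma w_0\ne1\) gives \(h_\gamma(z_0,w_0)\ne0\).

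If instead every such pair has \(z_0^{\trans}w_0=0\), a perturbation fixes this. The set of pairs with \(z^{\trans}(\gamma-I)w\ne0\) is nonempty and Zariski open, and so is \(\{z^{\trans}w\ne0\}\). Two nonempty Zariski open subsets of \(\C^{2N}\) intersect, so some pair satisfies both conditions, and the rescaling argument applies to it.

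The only point requiring a moment's care is the irreducibility of \(H_I\). Alternatively, one can avoid it by working in the smooth connected manifold \(H_I\): each \(H_I\cap H_\gamma\) is the zero set of a holomorphic function on \(H_I\) that is not identically zero, hence nowhere dense. A finite union of nowhere dense closed sets has dense complement. I do not expect any genuine obstacle here.
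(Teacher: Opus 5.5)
Your proof is correct. It shares the paper's outer skeleton: each $H_I\cap H_\gamma$, $\gamma\ne I$, is a proper closed subset of the irreducible variety $H_I$, so finitely many of them cannot cover it. Where you differ is in how you prove properness.

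The paper first proves that $h=1-z^{\trans}w$ is irreducible, via Gauss's lemma, viewing $h$ as primitive and linear in $z_1$. It then argues by contradiction. If $h_\gamma$ vanished on $H_I$, the Nullstellensatz would give $h\mid h_\gamma$. Comparing total degree and constant term then forces $h_\gamma=h$, hence $\gamma=I$.

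You instead exhibit a point. You use that $z^{\trans}w$ and $z^{\trans}(\gamma-I)w$ are homogeneous of degree one in $w$. So rescaling $w$ carries any point where both are nonzero onto $H_I$, while keeping $z^{\trans}\gamma w\ne 1$. This needs no Nullstellensatz, and your first case is already covered by the Zariski-open argument.

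The trick actually does more than you use it for. Pick $(z_0,w_0)$ in the nonempty Zariski-open subset of $\C^{2N}$ where $z^{\trans}w$ and all the finitely many forms $z^{\trans}(\gamma-I)w$, $\gamma\ne I$, are simultaneously nonzero, then rescale $w_0$. This lands directly in $H_\Gamma$. So nonemptiness requires neither the irreducibility of $H_I$ nor the finite-union step.

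What the paper's route buys is a proved structural statement: $H_I$ is irreducible and $H_\Gamma$ is a dense Zariski-open subset of it. In your write-up, two facts are asserted rather than proved:
\begin{itemize}
\item the irreducibility of $1-z^{\trans}w$;
\item in your analytic alternative, the connectedness of $H_I$. This is true, e.g.\ because $(z,w)\mapsto z$ fibers $H_I$ over $\C^N\setminus\{0\}$ with affine-hyperplane fibers.
\end{itemize}
By the observation above, this is harmless for the lemma as stated. It would need justification if you want the density claim.
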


\begin{proof}
Regard \(h\) as a polynomial in \(z_1\) over
\(R:=\C[z_2,\ldots,z_N,w_1,\ldots,w_N]\). Its coefficients \(-w_1\) and
\(1-\sum_{j=2}^N z_jw_j\) are relatively prime in the unique factorization
domain \(R\). Thus \(h\) is primitive and linear over the fraction field of
\(R\), so Gauss's lemma implies that \(h\) is irreducible. If
\(h_\gamma|_{H_I}\equiv0\), then the Nullstellensatz gives \(h\mid h_\gamma\).
Since both polynomials have total degree two and constant term one, this forces
\(h_\gamma=h\), hence \(z^{\trans}(\gamma-I)w\equiv0\) and \(\gamma=I\), a
contradiction. Each \(H_I\cap H_\gamma\), \(\gamma\ne I\), is therefore a proper
closed subset of the irreducible variety \(H_I\); their finite union cannot equal
\(H_I\).
\end{proof}

\begin{proposition}
\label{prop:restriction-to-HI}
Let \(\Gamma\) be a finite subgroup of \(\U(N)\), let \(A\) be as in
Lemma~\ref{lem:base-potential-eval}, and set
\[
\psi:=\sum_{\gamma\ne I}
(\det\gamma)h_\gamma^{-(N+1)},
\qquad
D:=M(\psi),
\qquad
G_\Gamma:=\tr(AD).
\]
Define
\[
F_1:=
\frac{h^{N(N+2)}}{(N+1)^N}F_\Gamma+G_\Gamma.
\]
Then \(F_1\) is regular along \(H_\Gamma\) and vanishes on
\(H_\Gamma\). In particular, if \(G_\Gamma\not\equiv0\) on
\(H_\Gamma\), then \(F_\Gamma\not\equiv0\).
\end{proposition}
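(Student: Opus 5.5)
Since \(\phi_\Gamma=K+\psi\), where \(K=\kappa_I=h^{-(N+1)}\), and \(M\) is linear in its argument, we have \(M(\phi_\Gamma)=M_1+D\). Near any point of \(H_\Gamma\) the function \(\psi\), and hence every entry of \(D\), is regular, because each \(h_\gamma\) with \(\gamma\ne I\) is nonvanishing there. The plan is to expand \(\det(M_1+D)\) in powers of \(h\) using Lemma~\ref{lem:base-potential-eval}. We factor
\[
\det(M_1+D)=\det M_1\,\det(I+M_1^{-1}D),
\]
and \(\det M_1=(N+1)^N h^{-(N+1)(N+2)}\) is explicit.

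By \eqref{eq:MK-inverse}, \(M_1^{-1}D=(N+2)h^{N+1}A_0D-h^{N+2}AD\), and every entry of this matrix is regular and \(O(h^{N+1})\) near \(H_\Gamma\). Expanding \(\det(I+X)=1+\tr X+(\text{terms of degree}\ge2\text{ in entries of }X)\), the quadratic and higher terms are \(O(h^{2(N+1)})\). Hence
\[
\det(I+M_1^{-1}D)=1+(N+2)h^{N+1}D_{11}-h^{N+2}\tr(AD)+O(h^{2N+2}),
\]
where \(D_{11}=\psi\). Next, \((N+1)^N\phi_\Gamma^{N+2}=(N+1)^N h^{-(N+1)(N+2)}(1+h^{N+1}\psi)^{N+2}\). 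The binomial expansion gives
\[
(1+h^{N+1}\psi)^{N+2}=1+(N+2)h^{N+1}\psi+O(h^{2N+2}).
\]

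Subtracting, the leading terms \(1\) and the terms \((N+2)h^{N+1}\psi\) cancel exactly. We obtain
\[
F_\Gamma=(N+1)^N h^{-(N+1)(N+2)}\bigl(-h^{N+2}G_\Gamma+O(h^{2N+2})\bigr),
\]
where the \(O\)-term is a function regular near \(H_\Gamma\) times \(h^{2N+2}\). Multiplying by \(h^{N(N+2)}/(N+1)^N\) and using \(N(N+2)-(N+1)(N+2)=-(N+2)\) gives
\[
\frac{h^{N(N+2)}}{(N+1)^N}F_\Gamma=-G_\Gamma+h^{N}\cdot(\text{regular}).
\]
Since \(N\ge1\), the function \(F_1=h^N\cdot(\text{regular})\) is regular near \(H_\Gamma\) and vanishes on \(H_\Gamma\). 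For the last claim, suppose \(F_\Gamma\equiv0\). Then \(F_1=G_\Gamma\), so \(G_\Gamma\) vanishes on \(H_\Gamma\).

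The main point requiring care is the bookkeeping of orders: we must check that the \(h^{N+1}\) terms cancel exactly, and that all remaining error terms carry at least \(h^{2N+2}\), which exceeds \(h^{N+2}\) since \(N\ge1\). This holds because the quadratic terms in \(X=M_1^{-1}D\) are products of two entries, each \(O(h^{N+1})\). Regularity of the error terms is local near each point of the open set \(H_\Gamma\) (nonempty by Lemma~\ref{lem:nonidentity-regular-on-HI}), where \(D\), \(A\), \(A_0\) and \(h\) are all polynomial or regular. The expansions are exact polynomial identities in \(h\) with regular coefficients, so no convergence issue arises.
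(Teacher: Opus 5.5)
Your proposal is correct and follows essentially the same route as the paper: factor out \(\det M_1\), expand \(\det(I+M_1^{-1}D)\) to first order via Lemma~\ref{lem:base-potential-eval}, cancel the \(h^{N+1}\psi\) terms against the binomial expansion of \((1+h^{N+1}\psi)^{N+2}\), and divide by \(h^{N+2}\) to get \(F_1=h^N(\text{regular})\). One wording point: \(H_\Gamma\) is open in \(H_I\), not in \(\C^N\times\C^N\); what your argument actually uses is that each point of \(H_\Gamma\) has a neighborhood on which every \(h_\gamma\) with \(\gamma\ne I\) is nonvanishing, which is true.
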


\begin{proof}
Write
\[
F_\Gamma=P-Q,
\qquad
P:=\det M(\phi_\Gamma),
\qquad
Q:=(N+1)^N\phi_\Gamma^{N+2},
\]
and set
\[
\alpha:=\frac{h^{(N+1)(N+2)}}{(N+1)^N}.
\]
By Lemma~\ref{lem:base-potential-eval},
\(\alpha=(\det M_1)^{-1}\). Since
\[
M(\phi_\Gamma)=M_1+D,
\]
we have
\[
\alpha P
=
\det(I+M_1^{-1}D).
\]
Using \eqref{eq:MK-inverse}, this becomes
\[
\alpha P
=
\det\bigl(
I+(N+2)h^{N+1}A_0D-h^{N+2}AD
\bigr).
\]
The entries of \(A_0D\) and \(AD\) are regular along \(H_\Gamma\).
Every term of degree at least two in the determinant expansion is
divisible by \(h^{2N+2}\). Hence, for some rational function \(R\)
regular along \(H_\Gamma\),
\[
\alpha P
=
1+(N+2)h^{N+1}\tr(A_0D)
-h^{N+2}\tr(AD)
+h^{2N+2}R.
\]
Since
\[
\tr(A_0D)=\psi,
\qquad
\tr(AD)=G_\Gamma,
\]
we obtain
\[
\alpha P
=
1+(N+2)h^{N+1}\psi
-h^{N+2}G_\Gamma
+h^{2N+2}R.
\]

On the other hand, since
\(\phi_\Gamma=h^{-(N+1)}+\psi\), the binomial theorem gives
\[
\alpha Q
=
\bigl(1+h^{N+1}\psi\bigr)^{N+2}
=
1+(N+2)h^{N+1}\psi+h^{2N+2}S
\]
for some rational function \(S\) regular along \(H_\Gamma\).
Therefore
\[
\alpha F_\Gamma
=
-h^{N+2}G_\Gamma+h^{2N+2}(R-S).
\]
Dividing by \(h^{N+2}\), we find
\[
\frac{h^{N(N+2)}}{(N+1)^N}F_\Gamma
=
-G_\Gamma+h^N(R-S).
\]
Consequently,
\[
F_1=h^N(R-S).
\]
This expression is regular along \(H_\Gamma\) and vanishes there.

Finally, if \(F_\Gamma\equiv0\), then
\(F_1=G_\Gamma\), so the vanishing of \(F_1\) on \(H_\Gamma\)
would imply \(G_\Gamma\equiv0\) there. The last assertion follows.
\end{proof}

\section{Nonvanishing in the nonscalar case}
\label{sec:distinguished-points-and-poles}

In this section, let $\Gamma \subset \U(N), N \geq 2$ be a nontrivial finite fixed-point-free group containing a non-scalar element. Recall that \(\Gamma\) being fixed-point-free, implies \(1\notin\Spec(\gamma)\) for every \(\gamma\in\Gamma\setminus\{I\}\). Proposition~\ref{prop:restriction-to-HI} reduces the non-scalar case to proving
that \(G_\Gamma \not\equiv 0\) in \(H_{\Gamma}\). We construct a point at which
exactly one summand of \(\psi\) becomes singular on \(H_I\), and approach it
through \(H_\Gamma\) along an explicit affine curve.

\begin{lemma}
\label{lem:left-eigenvector-avoidance}
Let \(A_1,\ldots,A_k\in M_N(\C)\) be non-scalar matrices, then there exists a nonzero vector \(v \in\C^N\) which is not a left eigenvector of any \(A_j\).
\end{lemma}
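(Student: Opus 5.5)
The plan is to show that, for each non-scalar \(A_j\), the set of left eigenvectors of \(A_j\) (together with \(0\)) is contained in a finite union of proper linear subspaces of \(\C^N\). Then \(\C^N\) cannot be covered by finitely many such sets, and any vector outside their union works. Recall that \(v\ne0\) is a left eigenvector of \(A_j\) precisely when \(v^{\trans}A_j=\lambda v^{\trans}\) for some \(\lambda\in\C\). Equivalently, \(v\) lies in \(\ker(A_j^{\trans}-\lambda I)\) for some eigenvalue \(\lambda\) of \(A_j^{\trans}\), which is the same as an eigenvalue of \(A_j\).

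First I fix \(j\) and let \(\lambda_1,\dots,\lambda_r\) be the distinct eigenvalues of \(A_j\); there are at most \(N\) of them. The set of left eigenvectors of \(A_j\), together with \(0\), is exactly
\[
\bigcup_{i=1}^{r}\ker\bigl(A_j^{\trans}-\lambda_i I\bigr).
\]
Each of these kernels is a proper subspace. Indeed, if \(\ker(A_j^{\trans}-\lambda_i I)=\C^N\), then \(A_j^{\trans}=\lambda_i I\), so \(A_j=\lambda_i I\) would be scalar, contrary to the hypothesis. This is the only place the non-scalar assumption enters.

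Next, I collect the finitely many proper subspaces \(V_{j,i}\) over all \(j\le k\) and \(i\le r_j\). I then invoke the standard fact that a vector space over an infinite field is not a finite union of proper subspaces. If a self-contained argument is wanted, choose for each \(V_{j,i}\) a nonzero linear functional \(\ell_{j,i}\) vanishing on it. The product \(\prod_{j,i}\ell_{j,i}\) is a nonzero polynomial on \(\C^N\), because \(\C[x_1,\ldots,x_N]\) is an integral domain. Since \(\C\) is infinite, this polynomial does not vanish identically on \(\C^N\), so there is a point \(v\) where it is nonzero. Such a \(v\) lies in no \(V_{j,i}\); in particular \(v\ne0\), since \(0\) lies in every subspace. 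Hence \(v\) is not a left eigenvector of any \(A_j\).

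There is no serious obstacle here. The only point needing care is the observation that the eigenspace for a single eigenvalue is proper, which is exactly the non-scalar hypothesis. In the later application, the \(A_j\) will presumably be the non-scalar elements of \(\Gamma\) (or their differences with other elements). I only need the lemma as stated, so nothing more is required.
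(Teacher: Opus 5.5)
Your proof is correct and follows the paper's argument: for each non-scalar \(A_j\), the left eigenvectors lie in the finite union \(\bigcup_{\lambda}\ker\bigl(A_j^{\trans}-\lambda I\bigr)\) of proper subspaces, and \(\C^N\) is not a finite union of proper subspaces. Your product-of-linear-functionals argument is simply a self-contained proof of that last fact, which the paper quotes as standard.
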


\begin{proof}
For a fixed non-scalar matrix \(M\), its left eigenvectors lie in
\(\bigcup_{\lambda\in\Spec(M)}\ker((M-\lambda I)^{\trans})\), a finite union of
proper linear subspaces. A finite-dimensional vector space over an infinite field
cannot be covered by finitely many proper linear subspaces, so the same conclusion
holds for the finite collection \(A_1,\ldots,A_k\).
\end{proof}

\begin{lemma}
\label{lem:distinguished-point}
Let \(\eta\in\Gamma\) be non-scalar. Then there exist
\(p=(z,w)\in\C^N\times\C^N\) and \(u\in\C^N\) such that
\begin{equation}
\label{eq:distinguished-point-properties}
z^{\trans}w=1,
\qquad
z^{\trans}\eta w=1,
\qquad
z^{\trans}\delta w\ne1
\quad(\delta\in\Gamma\setminus\{I,\eta\}),
\end{equation}
and
\begin{equation}
\label{eq:approach-vector-properties}
z^{\trans}u=0,
\qquad
z^{\trans}\eta u\ne0.
\end{equation}
In particular, \(z^{\trans}\) and \(z^{\trans}\eta\) are linearly independent.
Moreover, for \(p_\epsilon:=(z,w+\epsilon u)\),
\begin{equation}
\label{eq:explicit-approach-curve}
p_\epsilon\in H_\Gamma
\quad\text{for every sufficiently small }\epsilon\ne0,
\qquad
p_\epsilon\to p
\quad(\epsilon\to0).
\end{equation}
\end{lemma}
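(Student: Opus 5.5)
First I would choose \(z\). Apply Lemma~\ref{lem:left-eigenvector-avoidance} to the finite collection of non-scalar matrices \(\eta\) and \(\delta-\eta\), \(\delta\in\Gamma\setminus\{I,\eta\}\) (each \(\delta-\eta\) is nonzero; if some \(\delta-\eta\) happens to be scalar, treat it separately as below). Take \(z\ne0\) not a left eigenvector of \(\eta\). Then \(z^{\trans}\) and \(z^{\trans}\eta\) are linearly independent, so the two linear equations \(z^{\trans}w=1\), \(z^{\trans}\eta w=1\) in \(w\) define an affine subspace \(L\) of codimension two, which is nonempty.

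Next I need \(w\in L\) with \(z^{\trans}\delta w\ne1\) for every \(\delta\ne I,\eta\). Since \(L\) is irreducible, it suffices to show that for each such \(\delta\) the affine function \(w\mapsto z^{\trans}\delta w-1\) is not identically zero on \(L\). If it were, then \(z^{\trans}\delta=\alpha z^{\trans}+\beta z^{\trans}\eta\) with \(\alpha+\beta=1\). This is the main obstacle: such a relation must be excluded by a good choice of \(z\). I would first try to arrange this genericity directly. The set of \(z\) for which \(z^{\trans}\delta\in\mathrm{span}(z^{\trans},z^{\trans}\eta)\) is the locus where the \(3\times N\) matrix with rows \(z^{\trans},z^{\trans}\eta,z^{\trans}\delta\) has rank at most \(2\). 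This is Zariski closed, so it is enough to show it is proper.

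If it were all of \(\C^N\), fixed-point-freeness should give a contradiction. Take \(z\) to be a left eigenvector of \(\delta\), say \(z^{\trans}\delta=\mu z^{\trans}\) with \(\mu\ne1\), and perturb it. Alternatively, use that for \(N=2\) the rank condition is automatic, so the equality \(\alpha+\beta=1\) is what must fail. For \(N=2\), fix the bases \(z^{\trans},z^{\trans}\eta\) and write \(z^{\trans}\delta=\alpha(z)z^{\trans}+\beta(z)z^{\trans}\eta\). I would show that \(\alpha+\beta\equiv1\) forces \(z^{\trans}(\delta-I)=\beta(z)z^{\trans}(\eta-I)\) for all \(z\). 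Since \(\eta-I\) is invertible, this gives \((\delta-I)(\eta-I)^{-1}=\) a scalar \(c\) on a dense set, hence \(\delta-I=c(\eta-I)\). Taking eigenvalues, \(\delta\) and \(\eta\) are both unitary with \(\delta=(1-c)I+c\eta\). Unitarity of both forces \(c\in\{0,1\}\), or \(\eta\) scalar: two distinct eigenvalues \(\lambda_i\) of \(\eta\) must both satisfy \(|1-c+c\lambda_i|=1\), together with \(|\lambda_i|=1\), and the circles intersect in at most the points \(\lambda=1\) and one other. Only \(c=1\) remains, i.e.\ \(\delta=\eta\), a contradiction. The same argument in general \(N\) shows each bad set of \(z\) is proper, so a generic \(z\) works for all \(\delta\) simultaneously, and then a generic \(w\in L\) works.

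Finally I choose \(u\): independence of \(z^{\trans}\) and \(z^{\trans}\eta\) gives \(u\) with \(z^{\trans}u=0\) and \(z^{\trans}\eta u\ne0\). Along \(p_\epsilon=(z,w+\epsilon u)\) we have \(h=0\), so \(p_\epsilon\in H_I\). Also \(h_\eta(p_\epsilon)=-\epsilon z^{\trans}\eta u\ne0\) for \(\epsilon\ne0\). For the other \(\delta\), \(h_\delta(p)\ne0\), so \(h_\delta(p_\epsilon)\ne0\) for small \(\epsilon\) by continuity. Hence \(p_\epsilon\in H_\Gamma\) and \(p_\epsilon\to p\).
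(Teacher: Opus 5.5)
Your final argument is correct and is essentially the paper's proof in different coordinates. The conditions you impose on $z$ are: $z$ is not a left eigenvector of $\eta$, and not a left eigenvector of $(\delta-I)(\eta-I)^{-1}$, which is what $z^{\trans}(\delta-I)=\beta z^{\trans}(\eta-I)$ says. These are exactly the paper's conditions on $v^{\trans}=z^{\trans}(I-\eta)$, and you get the non-scalarity from the same two-circle argument. Drop the rank-$\le 2$ detour: that span locus can be all of $\C^N$ even for $N\ge3$ (e.g.\ $\delta=\eta^2$ when $\eta$ has only two distinct eigenvalues), so only the affine condition $\alpha+\beta=1$ does the work, i.e.\ the Zariski-closed rank-$\le1$ condition on $z^{\trans}(\delta-I)$ and $z^{\trans}(\eta-I)$.
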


\begin{proof}
Set \(S:=I-\eta\), which is invertible because \(1\notin\Spec(\eta)\). We first
verify that \(S^{-1}\) and every \(S^{-1}(I-\delta)\),
\(\delta\in\Gamma\setminus\{I,\eta\}\), are non-scalar. The assertion for
\(S^{-1}\) follows from the non-scalarity of \(\eta\). If
\(S^{-1}(I-\delta)=\mu I\), then \(\delta=(1-\mu)I+\mu\eta\). The cases
\(\mu=0,1\) give \(\delta=I,\eta\), so assume \(\mu\ne0,1\). Choose distinct
eigenvalues \(\omega_1,\omega_2\) of the non-scalar unitary matrix \(\eta\).
Since \(\delta\) is unitary, the three distinct points
\(1,\omega_1,\omega_2\) lie on both circles
\[
|\omega|=1,
\qquad
|1-\mu+\mu\omega|=1.
\]
The circles must coincide, but the second has center \(1-\mu^{-1}\); hence
\(\mu=1\), a contradiction.

By Lemma~\ref{lem:left-eigenvector-avoidance}, choose a nonzero row vector
\(v^{\trans}\) which is not a left eigenvector of any matrix just listed, and put
\(L:=\ker(v^{\trans})\). The restrictions to \(L\) of
\[
w\longmapsto v^{\trans}S^{-1}w,
\qquad
w\longmapsto v^{\trans}S^{-1}(I-\delta)w
\quad(\delta\in\Gamma\setminus\{I,\eta\})
\]
are nonzero: otherwise \(v^{\trans}A\) would vanish on \(\ker(v^{\trans})\) and
would therefore be a scalar multiple of \(v^{\trans}\), making \(v^{\trans}\) a
left eigenvector of \(A\). Since \(\dim L=N-1\ge1\), choose \(w_0\in L\)
outside the finitely many kernels of these restricted functionals, and define
\[
w:=\frac{w_0}{v^{\trans}S^{-1}w_0},
\qquad
z^{\trans}:=v^{\trans}S^{-1}.
\]
Then \(v^{\trans}w=0\), \(z^{\trans}w=1\), and
\[
z^{\trans}\eta w=1-v^{\trans}w=1,
\qquad
z^{\trans}\delta w
=1-v^{\trans}S^{-1}(I-\delta)w\ne1,
\]
which proves \eqref{eq:distinguished-point-properties}. Since
\(z^{\trans}\eta=z^{\trans}-v^{\trans}\), an equality
\(z^{\trans}\eta=c z^{\trans}\) would give
\(v^{\trans}=(1-c)v^{\trans}S^{-1}\). As \(v^{\trans}\ne0\), one has
\(c\ne1\), so \(v^{\trans}\) would be a left eigenvector of \(S^{-1}\), a
contradiction. Hence the restriction of
\(z^{\trans}\eta\) to \(\ker(z^{\trans})\) is nonzero, so one may choose
\(u\in\ker(z^{\trans})\) satisfying \(z^{\trans}\eta u\ne0\).

For \(p_\epsilon=(z,w+\epsilon u)\), one has
\(h_I(p_\epsilon)=0\) and
\(h_\eta(p_\epsilon)=-\epsilon z^{\trans}\eta u\ne0\) when \(\epsilon\ne0\).
For \(\delta\in\Gamma\setminus\{I,\eta\}\),
\(h_\delta(p_\epsilon)=h_\delta(p)-\epsilon z^{\trans}\delta u\), whose constant
term is nonzero by \eqref{eq:distinguished-point-properties}. Since there are
only finitely many such \(\delta\), all these quantities remain nonzero for every
sufficiently small \(\epsilon\ne0\), proving \eqref{eq:explicit-approach-curve}.
\end{proof}

\begin{proposition}
\label{prop:genuine-pole-at-distinguished-point}
If \(\Gamma\subset \U(N)\) is finite and fixed-point-free and contains a non-scalar element, then
\(G_\Gamma\not\equiv0\) on \(H_\Gamma\).
\end{proposition}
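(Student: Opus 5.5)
The plan is to show that \(G_\Gamma\) blows up along the explicit curve \(p_\epsilon\) of Lemma~\ref{lem:distinguished-point}. Since \(p_\epsilon\in H_\Gamma\) for small \(\epsilon\ne0\), this shows \(G_\Gamma\not\equiv0\) on \(H_\Gamma\). Write
\[
G_\Gamma=\sum_{\gamma\ne I}\det(\gamma)\,\tr(AM_\gamma)
=\sum_{\gamma\ne I}\det(\gamma)\,\kappa_\gamma\,\tr(AB_\gamma),
\]
using \eqref{eq:M-gamma-formula}. For \(\delta\in\Gamma\setminus\{I,\eta\}\), the function \(h_\delta\) does not vanish at \(p\) by \eqref{eq:distinguished-point-properties}. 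Hence \(\kappa_\delta\), \(a_\delta\) and \(B_\delta\) are regular near \(p\), and these summands stay bounded as \(\epsilon\to0\). Also \(A\) is polynomial. So it suffices to show that the single term \(\det(\eta)\,\kappa_\eta\,\tr(AB_\eta)\) is unbounded along \(p_\epsilon\).

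Along \(p_\epsilon=(z,w')\) with \(w'=w+\epsilon u\), we have exactly \(z^{\trans}w'=1\) and \(h_\eta=-\epsilon\, z^{\trans}\eta u\ne0\). Write \(t:=h_\eta\), so \(t\) is a nonzero multiple of \(\epsilon\), and \(z^{\trans}\eta w'=1-t\). Then \(\kappa_\eta=t^{-(N+1)}\) and \(a_\eta=(N+1)t^{-1}\). Using \(E_\eta=\eta+t^{-1}W_\eta\), I will expand \(\tr(AB_\eta)\) exactly as a Laurent polynomial in \(t\). Its coefficients are polynomials in \(z\), \(w'\) and are hence regular (and analytic in \(\epsilon\)) near \(\epsilon=0\). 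The blocks of \(A\) contribute as follows:
\begin{itemize}
\item the \((1,1)\) entry gives \(N+1\);
\item the off-diagonal blocks give \(2a_\eta\, z^{\trans}\eta w'=2a_\eta(1-t)\);
\item the lower block \(\tfrac{1}{N+1}(w'z^{\trans}-I)\) gives
\[
\frac{a_\eta}{N+1}\bigl(z^{\trans}\eta w'-\tr\eta\bigr)
+\frac{N+2}{(N+1)^2}\,a_\eta^2\bigl((1-t)^2-z^{\trans}\eta^2w'\bigr).
\]
\end{itemize}
Here I used \(\tr(w'z^{\trans}W_\eta)=(z^{\trans}\eta w')^2\) and \(\tr W_\eta=z^{\trans}\eta^2 w'\).

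Multiplying by \(t^2\) gives a polynomial \(P(t,\epsilon)\), and the \(\eta\)-summand equals \(\det(\eta)\,t^{-(N+3)}P\). This is unbounded unless \(P\) vanishes to order at least \(N+3\ge5\) in \(\epsilon\). That is impossible as soon as one low-order coefficient of \(P\) at \(\epsilon=0\) is nonzero. In the generic case \(\eta^2\ne I\), the \(t^0\) coefficient of \(P\) at \(\epsilon=0\) is \(\tfrac{N+2}{1}\bigl(1-z^{\trans}\eta^2w\bigr)\), up to the normalisation above. I expect this to be nonzero once \(p\) is chosen suitably. To guarantee it, I will add \(\delta=\eta^2\) to the list of matrices avoided in the construction of Lemma~\ref{lem:distinguished-point}: that construction already forces \(z^{\trans}\delta w\ne1\) for \(\delta\ne I,\eta\), and \(\eta^2\ne I,\eta\) here.

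The main obstacle is the involutive case \(\eta^2=I\), where \(z^{\trans}\eta^2w'=1\) identically and the \(a_\eta^2\) coefficient collapses. Then \((1-t)^2-1=t^2-2t\), and the \(t^{-2}\) and \(t^{-1}\) terms must be recombined. The \(t^{-1}\) coefficient becomes
\[
(N+1)\Bigl(2+\frac{1-\tr\eta}{N+1}-\frac{2(N+2)}{N+1}\Bigr)=-(N+1)-\tr\eta .
\]
A fixed-point-free involution is \(-I\), which is scalar. So in our non-scalar situation \(\eta^2=I\) cannot occur for \(\eta\) itself. Alternatively, the computation shows the coefficient \(-(N+1)-\tr\eta\) is nonzero anyway, since \(|\tr\eta|\le N\). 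Either way, \(\tr(AB_\eta)\) has a nonzero Laurent coefficient of order \(\ge -2\). Multiplying by \(\kappa_\eta=t^{-(N+1)}\) then leaves a pole of order at least \(N\ge2\) in \(\epsilon\). Combined with boundedness of the other summands, this gives \(|G_\Gamma(p_\epsilon)|\to\infty\), so \(G_\Gamma\not\equiv0\) on \(H_\Gamma\).
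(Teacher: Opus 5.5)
Your proposal is correct and is essentially the paper's argument: the paper multiplies \(G_\Gamma\) by \(h_\eta^{N+3}\) along \(p_\epsilon\) and obtains the limit \((\det\eta)(N+2)\bigl(1-z^{\trans}\eta^2w\bigr)\), which is exactly your \(t^{-2}\) coefficient. It is nonzero because \(\eta^2\notin\{I,\eta\}\) (a fixed-point-free involution must be \(-I\)), so no extra choice of \(p\) is needed. Your ``alternative'' computation in the involutive case has an arithmetic slip: the \(t^{-1}\) coefficient simplifies to \(-1-\tr\eta\), not \(-(N+1)-\tr\eta\), so \(|\tr\eta|\le N\) alone would not show it is nonzero. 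That branch is redundant, however, since you already exclude \(\eta^2=I\) for non-scalar \(\eta\).
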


\begin{proof}
By definition we have
$$G_\Gamma = \tr(AD)=\sum_{\gamma \ne I} \det(\gamma)\tr(AM_\gamma)= \det(\eta)\tr(AM_\eta) + \sum_{\gamma \ne \eta, I} \det(\gamma)\tr(AM_\gamma) $$
We multiply this expression by $h_\eta^{N+3}$ and consider the limit along 
$p_\epsilon \to p=(z,w)$ as in Lemma~\ref{lem:distinguished-point}. Since the terms $AM_\gamma, \gamma \ne \eta,I$ have no poles at $p$, the factor $h_\eta^{N+3}$ causes these terms to vanish in the limit at $p$. Also by \eqref{eq:M-gamma-formula} we have  
$h_\eta^{N+3}\tr(AM_\eta) = 
h_\eta^{N+3}\kappa_\eta \tr(AB_\eta) =h_\eta^2\tr(AB_\eta). $

Now we have $$A=\begin{bmatrix}
N+1 & z^{\trans}\\[4pt]
w & \frac{wz^{\trans}-I}{N+1}
\end{bmatrix} \text{ and } h_\eta^2B_\eta = \begin{bmatrix}
0 & 0\\[4pt]
0 & (N+1)(N+2)W_\eta 
\end{bmatrix} + h_\eta C$$
where $C$ is a matrix without poles at $p$. Since \(A(p_\epsilon)\to A(p)\), it follows that
\[
\lim_{\epsilon\to0}
h_\eta(p_\epsilon)^2
\tr\bigl(A(p_\epsilon)B_\eta(p_\epsilon)\bigr)
=
(N+2)\tr\bigl((wz^{\trans}-I)W_\eta\bigr) =(N+2) \tr X.
\]
where 
$$X=(wz^\trans -I)W_\eta = (wz^\trans -I)(\eta wz^\trans\eta)=wz^\trans \eta wz^\trans\eta -\eta wz^\trans\eta.$$ 
Since $(z^\trans \eta w)=1$ at $p=(z,w)$ we get $\tr(X) = (z^\trans \eta w)^2 - z^\trans \eta^2 w =1-z^\trans \eta^2 w$, and so 

\begin{equation}
\label{eq:pole-limit}
\lim_{\epsilon\to0} h_\eta^{N+3}G_\Gamma(p_\epsilon)
=(\det\eta)\lim_{\epsilon\to0} h_\eta^{N+3}\tr(AM_\eta)=
(\det\eta)(N+2)\bigl(1-z^{\trans}\eta^2w\bigr).
\end{equation}

We claim that \(\eta^2\notin\{I,\eta\}\). Indeed, \(\eta^2=\eta\) would imply
\(\eta=I\), while \(\eta^2=I\), together with the fixed-point-free condition,
would force \(\eta=-I\), contradicting the assumption that \(\eta\) is
non-scalar. Hence, by \eqref{eq:distinguished-point-properties}, we get
\begin{equation}
\label{eq:p-not-Heta2}
z^{\trans}\eta^2w\ne1.
\end{equation}
Now \eqref{eq:pole-limit} implies that $\lim_{\epsilon\to0} h_\eta^{N+3}G_\Gamma(p_\epsilon)\ne0$, therefore $G_{\Gamma}(p_\epsilon) \ne 0$ for all sufficiently small $\epsilon>0$. Since these points are on $H_\Gamma$ the result follows. 
\end{proof}

\begin{proof}[Proof of Theorem~\ref{thm:phi-Gam}]
Suppose first that \(\Gamma\) contains a non-scalar element.
Proposition~\ref{prop:genuine-pole-at-distinguished-point} gives
\(G_\Gamma\not\equiv0\) on \(H_\Gamma\), and
Proposition~\ref{prop:restriction-to-HI} therefore yields
\(F_\Gamma\not\equiv0\).

If every element of \(\Gamma\) is scalar, then \(\Gamma\) is abelian
and \(\overline\Gamma=\Gamma\). Theorem~1.4 of
\cite{EXX2022ChengSteinabelian}, together with
Proposition~\ref{prop:EXX}, gives
\(F_\Gamma(z,\bar z)\not\equiv0\), and hence
\(F_\Gamma(z,w)\not\equiv0\). We also give a direct proof of the
scalar case in the appendix.
\end{proof}

We now mention some consequences of Theorem~\ref{thm:main-stein} where the boundary can be assumed to be of finite type instead of strongly pseudoconvex. There are different notions of finite type (for reference, please see \cite{DAngelo1982RealHypersurfaces, DAngelo1993SCV, Catlin1984BoundaryInvariants, BER1999CRgeometrybook}). In dimension two, these notions coincide (see \cite{BER1999CRgeometrybook}). The first corollary, in two dimensions, was known to Ebenfelt, Xiao, and the first author \cite{EGSX26C2}. 

\begin{corollary}
    Let $\Omega$ be a smoothly bounded, pseudoconvex, precompact domain with a finite-type boundary in a two-dimensional normal Stein space with possibly isolated singularities. Assume that $\partial \Omega$ is CR equivalent to an algebraic CR manifold in a complex Euclidean space. Then $\Omega$ has K\"ahler-Einstein Bergman metric if and only if $\Omega$ is biholomorphic to $\mathbb{B}^2$.
\end{corollary}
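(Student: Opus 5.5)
The strategy is to reduce the corollary to Theorem~\ref{thm:main-stein} by showing that, in complex dimension two, a finite-type pseudoconvex boundary whose Bergman metric is K\"ahler--Einstein is in fact strongly pseudoconvex.

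The reverse implication is immediate, since the Bergman metric of \(\Bball^2\) is K\"ahler--Einstein. For the forward implication, suppose the Bergman metric on \(\Omega_{\Reg}\) is K\"ahler--Einstein. Since \(\partial\Omega\subset\Omega'_{\Reg}\) (the boundary is smooth and compact, so the isolated singularities lie in the interior), the boundary analysis is purely local near \(\partial\Omega\), where \(\Omega\) looks like a smoothly bounded pseudoconvex finite-type domain in \(\C^2\).

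At this point I would invoke the localization results of Savale--Xiao \cite{SavaleXiao2023KEBergmanFiniteType}. For smoothly bounded pseudoconvex finite-type domains in dimension two, they show that a K\"ahler--Einstein Bergman metric forces every boundary point to be strongly pseudoconvex. Their argument compares the Bergman metric asymptotics at a weakly pseudoconvex point of type \(2k\), which are modeled on \(\{\Re w+|z|^{2k}<0\}\)-type domains, with the Einstein constant. Since the argument uses only boundary asymptotics of the Bergman kernel, which localize by H\"ormander/Catlin-type estimates, it should transfer to a domain in a Stein space whose boundary lies in the regular part.

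Once \(\partial\Omega\) is known to be strongly pseudoconvex, all hypotheses of Theorem~\ref{thm:main-stein} with \(N=2\) hold: \(\Omega\) is a normal Stein space with isolated singularities and compact, smooth, strongly pseudoconvex, algebraically realizable boundary. Hence \(\Omega\cong\Bball^2\).

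The main obstacle is the transfer of the Savale--Xiao localization to the singular-space setting. The Bergman space is now taken on \(\Omega_{\Reg}\), so one must check that localizing the kernel near a boundary point is unaffected by the interior singularities. I would first handle this via \(L^2\) \(\bar\partial\)-methods on a Stein neighborhood, using that the singularities are compactly contained and isolated. This gives comparability of \(K_\Omega\) with the kernel of a small local pseudoconvex domain near the boundary point, up to smoothing errors, which is all their asymptotic argument requires.
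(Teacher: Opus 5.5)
Your proposal follows the paper's route: the paper also uses the Savale--Xiao finite-type Bergman kernel asymptotics \cite{SavaleXiao2023KEBergmanFiniteType} to rule out weakly pseudoconvex boundary points under the K\"ahler--Einstein condition, and then applies the two-dimensional strongly pseudoconvex result (\cite{GangulySinha2026ChengStein2d} or Theorem~\ref{thm:main-stein}). For the localization step you flag as the main obstacle, the paper does not give its own $L^2$ $\bar\partial$ argument but cites the Bergman kernel localization theorem \cite[Theorem~2]{EGXlocalalgebraicity2025} for domains in Stein spaces with isolated singularities.
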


The preceding corollary follows from the localization theorem for the Bergman kernel \cite[Theorem~2]{EGXlocalalgebraicity2025}, Bergman kernel asymptotics of \cite{SavaleXiao2023KEBergmanFiniteType} and \cite{GangulySinha2026ChengStein2d} (or Theorem \ref{thm:main-stein}). In higher dimensions, one can prove the following result by combining
\cite{EGXlocalalgebraicity2025, HuangHsiaoLi2026LocalizationChengYau}, along with
Theorem~\ref{thm:main-stein}.

\begin{corollary}
Let $\Omega$ be a smoothly bounded, pseudoconvex, precompact domain with a finite-type boundary in an $N$-dimensional, normal Stein space with possibly isolated singularities, with $N \geq 3$. Assume that \(\partial\Omega\) is of finite
D'Angelo \(1\)-type
\cite{DAngelo1982RealHypersurfaces,DAngelo1993SCV}, and that every weakly pseudoconvex
boundary point is of \(h\)-extendible finite type in the sense of
Catlin--Yu
\cite{Catlin1984BoundaryInvariants,Yu1993GeometricAnalysis,
BoasStraubeYu1995BoundaryLimits}.  If \(\partial\Omega\) is CR equivalent to an algebraic CR manifold in a complex Euclidean space, then the
Bergman metric on the regular part of \(\Omega\) is K\"ahler--Einstein if and only if
\(\Omega\) is biholomorphic to \(\Bball^N\). 
\end{corollary}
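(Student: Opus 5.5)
The reverse implication is immediate, since the Bergman metric of \(\Bball^N\) is K\"ahler--Einstein. For the forward implication, the plan is to reduce to Theorem~\ref{thm:main-stein}. So assume the Bergman metric on \(\Omega_{\Reg}\) is K\"ahler--Einstein; the goal is to show that \(\partial\Omega\) has no weakly pseudoconvex points. Once \(\partial\Omega\) is known to be strongly pseudoconvex, it is compact, smooth, strongly pseudoconvex and CR equivalent to an algebraic CR manifold. Theorem~\ref{thm:main-stein} then gives \(\Omega\cong\Bball^N\). Note that the singularities of the ambient Stein space are isolated and \(\Omega\) is precompact, so we may assume \(\partial\Omega\) lies in the regular part of the ambient space. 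Hence every boundary point has a neighborhood biholomorphic to an open set in \(\C^N\).

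\emph{Step 1 (localization).} Fix \(q\in\partial\Omega\), and a small neighborhood \(U\) of \(q\) in the ambient space that is biholomorphic to a neighborhood in \(\C^N\). By the localization theorem of \cite[Theorem~2]{EGXlocalalgebraicity2025}, near \(q\) the Bergman kernel of \(\Omega\) differs from the Bergman kernel of a bounded pseudoconvex domain \(D\subset\C^N\) with \(D\cap U=\Omega\cap U\) only by a term that is negligible in the boundary asymptotics. Here \(D\) is smoothly bounded and of finite type near \(q\). The same holds for the Bergman metric and its curvature, at least to the order needed in the asymptotics. In particular, the K\"ahler--Einstein identity \(\det(g_{i\bar j})=c\,K\), with the Einstein constant fixed by a strongly pseudoconvex boundary point (which exists by compactness), passes to \(D\) asymptotically as one approaches \(q\).

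\emph{Step 2 (ruling out weakly pseudoconvex points).} Suppose \(q\) is weakly pseudoconvex. By hypothesis it is then of \(h\)-extendible finite type. I would follow the scaling argument of \cite{HuangHsiaoLi2026LocalizationChengYau}. Rescale \(D\) along a sequence tending to \(q\) in the Catlin--Yu coordinates. The rescaled domains converge to an \(h\)-extendible model \(\{\operatorname{Re} w+P(z,\bar z)<0\}\), where \(P\) is weighted homogeneous. By the convergence results of \cite{BoasStraubeYu1995BoundaryLimits}, the Bergman kernels and metrics converge as well. The limiting Einstein identity then forces the model's Bergman metric to be K\"ahler--Einstein with the same constant as the ball. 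The rigidity step of \cite{HuangHsiaoLi2026LocalizationChengYau} should then force the model to be biholomorphic to the Siegel half space. This means \(P\) has weights \((1/2,\dots,1/2)\) and is strictly plurisubharmonic, which contradicts weak pseudoconvexity of \(q\).

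I expect Step 2 to be the main obstacle. Specifically, it requires checking that the localization of \cite{EGXlocalalgebraicity2025}, which is stated for the kernel in a possibly singular Stein space, is strong enough in \(C^k\) to commute with the scaling limit and the Monge--Amp\`ere determinant. Only then can the Hsiao--Huang--Li rigidity be imported verbatim. The hypothesis \(N\ge3\) and the finite D'Angelo \(1\)-type assumption enter exactly here, to guarantee \(h\)-extendibility limits and the model-domain rigidity. In dimension two this step is handled instead by \cite{SavaleXiao2023KEBergmanFiniteType}.
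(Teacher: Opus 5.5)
Your proposal follows the paper's route. The paper's entire proof is the remark that the corollary follows by combining the localization theorem of \cite{EGXlocalalgebraicity2025} and the Hsiao--Huang--Li analysis \cite{HuangHsiaoLi2026LocalizationChengYau} with Theorem~\ref{thm:main-stein}, and your reduction is a faithful unpacking of that remark: use those inputs at $h$-extendible points to exclude weakly pseudoconvex boundary points, then invoke Theorem~\ref{thm:main-stein}.

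Two small remarks. First, $N\ge3$ does not actually enter Step~2: for $N=2$, finite type already gives $h$-extendibility, and the hypothesis only separates this corollary from the two-dimensional one. Second, the Einstein condition alone does not give $\det(g_{i\bar j})=cK$. It only makes $\log(\det(g_{i\bar j})/K)$ pluriharmonic on $\Omega_{\Reg}$, so you need a short extra argument to make it constant, for instance unique continuation from its Fefferman asymptotics near a strongly pseudoconvex boundary piece.
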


\appendix
\section{Nonvanishing in the scalar case}
\label{app:scalar-cyclic}

As mentioned in the introduction, the scalar case is already covered by
Theorem~1.4 of \cite{EXX2022ChengSteinabelian}. We include this appendix
to show that the same rational-identity method gives a direct obstruction in
the scalar cyclic case. Thus this appendix treats the remaining case in which
every element of \(\Gamma\) is scalar; if \(\Gamma\) contains a non-scalar element,
then the argument above applies.

Throughout this appendix, assume that every element of \(\Gamma\) is scalar and
that \(\Gamma\ne\{I\}\). Let \(|\Gamma|=\ell\). Identifying \(\lambda I_N\)
with \(\lambda\), we have
\[
\Gamma=\{\lambda:\lambda^\ell=1\}.
\]
Put \(s=z^{\trans}w\). For \(\gamma=\lambda I_N\), one has
\(\det\gamma=\lambda^N\) and \(h_\gamma=1-\lambda s\). Hence the complexified
potential \eqref{eq:complexified-phi} becomes
\begin{equation}
\label{eq:scalar-f-def}
\phi_\Gamma(z,w)=f(s),
\qquad
f(s):=\sum_{\lambda^\ell=1}
\frac{\lambda^N}{(1-\lambda s)^{N+1}}.
\end{equation}

\begin{lemma}
\label{lem:radial-bordered-determinant}
Let \(u(z,w)=f(s)\), where \(s=z^{\trans}w\) and \(f\) is meromorphic. Then
\begin{equation}
\label{eq:radial-J-formula}
\det M(u)=(f')^{N-1}\bigl(ff'+s(ff''-(f')^2)\bigr).
\end{equation}
\end{lemma}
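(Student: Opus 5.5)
Since \(s=z^{\trans}w\), the first derivatives are \(s_z=w\) and \(s_w=z\), and the mixed derivatives are \(s_{z_iw_j}=\delta_{ij}\). The chain rule therefore gives
\[
u=f,\qquad u_z=f'w,\qquad u_w=f'z,\qquad u_{zw}=f''\,wz^{\trans}+f'I.
\]
Hence
\[
M(u)=\begin{bmatrix} f & f'z^{\trans}\\ f'w & f'I+f''wz^{\trans}\end{bmatrix}.
\]
I will compute this determinant by a Schur complement with respect to the lower-right block, or, more cleanly, by the matrix determinant lemma. Both routes reduce the computation to a rank-one perturbation of a scalar matrix.

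Concretely, I work first on the open set where \(f'\ne0\) and where \(Y:=f'I+f''wz^{\trans}\) is invertible, and extend at the end by the identity theorem, since both sides are meromorphic. By the rank-one formula,
\[
\det Y=(f')^{N-1}\bigl(f'+f''s\bigr),
\qquad
Y^{-1}=\frac1{f'}\Bigl(I-\frac{f''}{f'+sf''}wz^{\trans}\Bigr).
\]
The Schur complement formula gives \(\det M(u)=\det Y\cdot\bigl(f-(f')^2z^{\trans}Y^{-1}w\bigr)\). Using \(z^{\trans}w=s\) and \(z^{\trans}wz^{\trans}w=s^2\), I get
\[
z^{\trans}Y^{-1}w=\frac1{f'}\Bigl(s-\frac{f''s^2}{f'+sf''}\Bigr)=\frac{s}{f'+sf''}.
\]

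Multiplying out,
\[
\det M(u)=(f')^{N-1}(f'+sf'')\Bigl(f-\frac{(f')^2 s}{f'+sf''}\Bigr)
=(f')^{N-1}\bigl(ff'+sff''-s(f')^2\bigr),
\]
which is \eqref{eq:radial-J-formula}.

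The only delicate point is justifying the divisions. Both sides are meromorphic, indeed polynomial expressions in the entries, so it suffices to verify the identity on the dense open set where \(f'(s)\ne0\) and \(f'+sf''\ne0\). If \(f'\) or \(f'+sf''\) vanishes identically, the same expansion works by continuity: perturb \(f\) to \(f+\epsilon s\), note that both sides are polynomial in \(\epsilon\), and let \(\epsilon\to0\). This is routine, so I do not expect a real obstacle.
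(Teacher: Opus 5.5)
Your proposal is correct and follows essentially the same route as the paper's proof:
\begin{itemize}
\item the chain-rule formulas for \(u_z\), \(u_w\) and \(u_{zw}\);
\item the rank-one expressions for \(\det L\), \(L^{-1}\) and \(z^{\trans}L^{-1}w\);
\item the Schur complement on the lower-right block;
\item extension of the identity by meromorphy.
\end{itemize}
Your perturbation \(f\mapsto f+\epsilon s\), which handles the degenerate cases where \(f'\) or \(f'+sf''\) vanishes identically, is a small extra care that the paper leaves implicit.
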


\begin{proof}
One has \(u_z=f'w\), \(u_w=f'z\), and
\(u_{zw}=L:=f'I+f''wz^{\trans}\). Applying the rank-one calculation in the proof of
Lemma~\ref{lem:base-potential-eval}, based on
\((wz^{\trans})^2=s\,wz^{\trans}\), to
\(L\), we obtain, wherever \(L\) is invertible,
\[
\det L=(f')^{N-1}(f'+sf''),
\qquad
L^{-1}=\frac1{f'}I-\frac{f''}{f'(f'+sf'')}wz^{\trans},
\qquad
z^{\trans}L^{-1}w=\frac{s}{f'+sf''}.
\]
Applying the Schur determinant formula
\cite[\S0.8.5, formula~(0.8.5.1)]{HornJohnsonMatrixAnalysis2013}
to the lower-right block \(L\) of \(M(u)\),
\[
\det M(u)=\det L\bigl(f-(f')^2z^{\trans}L^{-1}w\bigr)
=(f')^{N-1}\bigl(ff'+s(ff''-(f')^2)\bigr).
\]
Both sides are meromorphic, so the identity holds everywhere.
\end{proof}

\begin{proposition}
\label{prop:scalar-cyclic-obstruction}
If \(\Gamma\) is nontrivial and scalar, then the Bergman metric of
\(\Bball^N/\Gamma\) is not K\"ahler--Einstein.
\end{proposition}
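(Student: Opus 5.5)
By Proposition~\ref{prop:EXX} and the totally-real argument used in the proof of Theorem~\ref{thm:main-ball-quotient}, it suffices to show that \(F_\Gamma(z,w)\not\equiv0\). Note that \(\overline\Gamma=\Gamma\) in the scalar case. By \eqref{eq:scalar-f-def} and Lemma~\ref{lem:radial-bordered-determinant}, \(F_\Gamma\) is the one-variable function
\[
J(s):=(f')^{N-1}\bigl(ff'+s(ff''-(f')^2)\bigr)-(N+1)^N f^{N+2},
\]
evaluated at \(s=z^{\trans}w\). Since \(s\) takes all complex values, we must show \(J\not\equiv0\) as a rational function of \(s\). Following the outline in the introduction, I would study \(J/f^{N+2}\) near a nonzero zero \(b\) of \(f\), and show that it has a genuine pole there.

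\textbf{Step 1: \(f\) has a nonzero root.} Summing over \(\lambda^\ell=1\) kills all Taylor coefficients except those in a fixed residue class. Expanding \((1-\lambda s)^{-(N+1)}=\sum_k\binom{N+k}{N}\lambda^k s^k\), we get
\[
f(s)=\ell\sum_{k\equiv -N \ (\mathrm{mod}\ \ell)}\binom{N+k}{N}s^k .
\]
Write \(k_0\in[0,\ell)\) for the least such \(k\). Then \(f=\ell\, s^{k_0}P(s^\ell)/(1-s^\ell)^{N+1}\) for some polynomial \(P\). The fixed-point-free condition, \(\lambda\ne1\) acting on \(\C^N\), is automatic here. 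I need \(P\) to be nonconstant. Its degree is governed by the numerator of a rational function of degree \(N+1\) in \(t=s^\ell\), which I expect to have degree at least \(1\) once \(\ell\ge2\) and \(N\ge2\); for instance, \(\ell=2\) with \(N\) even gives \(\sum\binom{N+2j}{N}t^j\), which is \((1-t)^{-(N+1)}\) times a nontrivial Eulerian-type numerator. Establishing nonconstancy of \(P\) in general is the \textbf{main obstacle}. I would handle it by noting that \(P\) constant would force \(f\) to have no finite zeros except possibly \(0\). Combined with the behaviour of \(f\) at \(\infty\), namely \(f(s)\sim c\,s^{-(N+1)}\) or faster decay given by the residue symmetry, a degree count on \(s^{k_0}P(s^\ell)\) versus \((1-s^\ell)^{N+1}\) should yield a contradiction. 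Failing that, I would compare residues directly. Choose a root \(b\ne0\) of \(f\), of order \(m\ge1\).

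\textbf{Step 2: pole computation.} Near \(b\), write \(f=c(s-b)^m(1+O(s-b))\) with \(c\ne0\); \(f\) is regular at \(b\) since \(b^\ell\ne1\). Then
\[
ff''-(f')^2=-mc^2(s-b)^{2m-2}+\dots,\qquad ff'=O\bigl((s-b)^{2m-1}\bigr),
\]
so \(\det M\) has leading term \(-(mc)^{N-1}(s-b)^{(m-1)(N-1)}\,b\,mc^2(s-b)^{2m-2}\). Its order at \(b\) is \((m-1)(N+1)\), which is strictly less than \(m(N+2)\), the order of \(f^{N+2}\); the nonvanishing of the leading coefficient uses \(b\ne0\). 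Hence \(J/f^{N+2}\) has a pole of order \(N+1+m\ge1\) at \(b\), while \((N+1)^N\) is regular there. Therefore \(J\not\equiv0\), and the proposition follows.
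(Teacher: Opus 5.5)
Your reduction to $J\not\equiv0$ and your Step~2 are correct. Step~2 is the paper's computation: a zero $b\neq0$ of $f$ of order $m$ makes $J/f^{N+2}$ have a pole of order $N+1+m$, with leading coefficient proportional to $b$. The genuine gap is Step~1, which you leave open: you never show that $f$ has a zero $b\neq0$ that is not a pole.

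This is not a technicality, because Step~1 is exactly where $N\ge2$ must be used. Your Step~2 works verbatim for $N=1$. Yet for $N=1$ one has $k_0=\ell-1$ and $f(s)=\ell^2s^{\ell-1}/(1-s^\ell)^2$, so your $P$ is constant and no such $b$ exists. Your proposed degree count, run with only the information you state ($f=O(s^{-(N+1)})$ at $\infty$), gives merely $k_0\le(\ell-1)(N+1)$, which does not contradict $P$ being constant.

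The missing ingredient is the exact vanishing order of $f$ at $\infty$. Expand each summand in powers of $1/s$:
$\lambda^N(1-\lambda s)^{-(N+1)}=(-1)^{N+1}s^{-(N+1)}\sum_{a\ge0}\binom{N+a}{a}\lambda^{-1-a}s^{-a}$.
Summing over $\lambda^\ell=1$ kills every term unless $\ell$ divides $a+1$. Hence
$f(s)=(-1)^{N+1}\ell\binom{N+\ell-1}{\ell-1}s^{-(N+\ell)}+O(s^{-(N+\ell+1)})$.
Now suppose $P$ were constant. Then $f$ would be a nonzero multiple of $s^{k_0}(1-s^\ell)^{-(N+1)}$, which behaves like a nonzero constant times $s^{k_0-\ell(N+1)}$. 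This forces $k_0=N(\ell-1)\ge2(\ell-1)\ge\ell$, contradicting $k_0<\ell$. The paper makes the same comparison in another form: the numerator over $(1-s^\ell)^{N+1}$ has degree $N(\ell-1)$ but vanishes at $0$ only to order at most $\ell-1$.

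You should also justify your claim that $b^\ell\neq1$. Each point $s=\lambda^{-1}$ is a pole of $f$ of exact order $N+1$, since only one summand is singular there. Hence $P(1)\neq0$, and no root of $P$ gives a pole of $f$. Also $P(0)=\binom{N+k_0}{N}\neq0$, so every root of $P$ automatically yields $b\neq0$.
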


\begin{proof}
By \eqref{eq:complexified-phi} and
Lemma~\ref{lem:radial-bordered-determinant}, \(F_\Gamma\) depends only on
\(s=z^{\trans}w\); so we write it as \(F(s)\). Then
\begin{equation}
\label{eq:scalar-F-unnormalized}
F(s)=(f')^{N-1}\bigl(ff'+s(ff''-(f')^2)\bigr)-(N+1)^Nf^{N+2}.
\end{equation}
It remains to prove that \(F\not\equiv0\).

We first choose a zero of \(f\) away from its poles. The poles are precisely the
points \(s=\lambda^{-1}\), \(\lambda^\ell=1\), each of order \(N+1\). Thus
\[
f(s)=\frac{P(s)}{(1-s^\ell)^{N+1}},
\]
where \(P\) does not vanish at any zero of \(1-s^\ell\). As \(s\to\infty\),
\[
f(s)=(-1)^{N+1}s^{-(N+1)}
\sum_{a\ge0}\binom{N+a}{a}s^{-a}
\sum_{\lambda^\ell=1}\lambda^{-1-a}.
\]
The first nonzero root-of-unity sum occurs at \(a=\ell-1\), so
\[
f(s)=(-1)^{N+1}\ell\binom{N+\ell-1}{\ell-1}s^{-(N+\ell)}
+O\bigl(s^{-(N+\ell+1)}\bigr),
\qquad
\deg P=N(\ell-1).
\]
Near \(s=0\),
\[
f(s)=\sum_{a\ge0}\binom{N+a}{a}s^a
\sum_{\lambda^\ell=1}\lambda^{N+a}.
\]
Exactly one index \(a\in\{0,\ldots,\ell-1\}\) gives a nonzero root-of-unity
sum. Hence \(P\) vanishes at \(0\) to order at most \(\ell-1\). Since
\(N\ge2\), \(\deg P=N(\ell-1)>\ell-1\), so \(P\) has a zero \(b\ne0\). By the
choice of \(P\), this \(b\) is not a pole of \(f\).

Let \(r\ge1\) be the vanishing order of \(f\) at \(b\), and write
\(f(s)=(s-b)^ru(s)\), where \(u\) is holomorphic and \(u(b)\ne0\). With
\(Y:=f'/f\), one has
\(ff'+s(ff''-(f')^2)=f^2(Y+sY')=f^2(sY)'\). Dividing
\eqref{eq:scalar-F-unnormalized} by \(f^{N+2}\) gives
\[
\frac{F}{f^{N+2}}=\frac{Y^{N-1}(sY)'}{f}-(N+1)^N.
\]
Since
\[
Y=\frac{r}{s-b}+\frac{u'}u,
\qquad
(sY)'=-\frac{rb}{(s-b)^2}+O(1),
\]
we obtain
\[
\frac{Y^{N-1}(sY)'}{f}
=-\frac{br^N}{u(b)}(s-b)^{-(N+r+1)}
+\text{lower-order pole terms}.
\]
The leading coefficient is nonzero because \(b\ne0\). Therefore
\(F/f^{N+2}\) has a genuine pole at \(b\), and hence \(F\not\equiv0\). The
conclusion follows from Proposition~\ref{prop:EXX}.
\end{proof}

\bibliographystyle{alpha}
\bibliography{Chengref}

\end{document}